\NeedsTeXFormat{LaTeX2e}
\documentclass{amsart}

\usepackage{amsmath,amssymb,bbm}
\usepackage{amsfonts}
\usepackage{MnSymbol}
\usepackage{mathrsfs}

\usepackage{enumerate}
\usepackage[notref,notcite]{showkeys}
\usepackage{hyperref}

\hfuzz1pc 

\theoremstyle{plain} 

\newtheorem{thm}{Theorem}[section]

\newtheorem{lem}[thm]{Lemma}
\newtheorem{prop}[thm]{Proposition}
\newtheorem{exam}[thm]{Example}

\newtheorem{defn}[thm]{Definition}

\theoremstyle{remark}



\def\la{{\langle}}
\def\ra{{\rangle}}
\def\f{\frac}

\def\({\left(}
\def \){ \right)}
\def\[{\left[}
\def \]{ \right]}

 \def\u{{\mathbf{u}}}
 
 \def\x{{\mathbf{x}}}

 \def\d{{\textnormal{d}}}
 \def\e{{\textnormal{e}}}
 \def\i{{\textnormal{i}}}

 \def\a{{\alpha}}
 \def\b{{\beta}}
 \def\g{{\gamma}}
 
 \def\t{{\theta}}
 \def\l{{\lambda}}

 \def\s{{\sigma}}
 

 \def\ub{{\mathbf u}}
 
 \def\xb{{\mathbf x}}


 \def\CI{{\mathcal I}}

 \def\CV{{\mathcal V}}
 
 \def\CW{{\mathcal W}}


 \def\NN{{\mathbb N}}
 \def\PP{{\mathbb P}}
 
 \def\RR{{\mathbb R}}


 \def\sC{{\mathsf C}}

 
 \def\fs{{\mathfrak s}}
 


  
  \def\dim{\operatorname{dim}}

  \def\rank{\operatorname{rank}}

\def\Tr{\mathsf{T}}

 
\newif\ifpdf
\ifx\pdfoutput\undefined
  \pdffalse
\else
  \pdftrue
\fi

\ifpdf
  \usepackage[pdftex]{graphicx}
  \DeclareGraphicsExtensions{.pdf,.jpg,.png}
\else
  \usepackage{graphicx}
\fi
\graphicspath{{Figures/}}

\graphicspath{{./}}

\begin{document}

\title{Minimal cubature rules and Koornwinder polynomials}

\author{Yuan Xu}
\address{Department of Mathematics\\ University of Oregon\\
    Eugene, Oregon 97403-1222.}\email{yuan@uoregon.edu}

\dedicatory{Dedicate to Tom Koornwinder for his pioneer work}

\date{\today}
\thanks{The work was partly supported by Simons Foundation Grant \#849676.}
\keywords{Cubature rules, minimal cubature, Koornwinder orthogonal polynomials, two variables}
\subjclass[2010]{33C50, 65D32}

\begin{abstract}
In his classical paper \cite{K74a}, Koornwinder studied a family of orthogonal polynomials of two variables, 
derived from symmetric polynomials. This family possesses a rare property that orthogonal polynomials of 
degree $n$ have $n(n+1)/2$ real common zeros, which leads to important examples in the theory of 
minimal cubature rules. This paper aims to give an account of the minimal cubature rules of two variables 
and examples originating from Koornwinder polynomials, and we will also provide further examples.  
\end{abstract}

\maketitle

\section{Introduction}
\setcounter{equation}{0}

The two families of orthogonal polynomials of two variables introduced by Koornwinder in \cite{K74a, K74b}, 
derived via symmetric polynomials, are important for minimal cubature rules. We discuss the impact of the first
family in this context. 

A cubature rule is a finite linear combination of point evaluations that gives an approximation to a multivariate integral. 
It is called a quadrature rule if the integral is of one variable. We are interested in the cubature rules of two variables. 
Let $\Omega$ be a domain in $\RR^2$ and let $\CI(f)$ be an integral on $\Omega$, 
$$
       \CI f = \int_\Omega f(\x) W(\x) \d \x, \qquad \x = (x_1,x_2), 
$$
where $W$ is a fixed nonnegative function on $\Omega$. Let $m$ be a positive integer. A cubature rule, $\sC(f)$, 
of degree of precision $m$ is given by 
$$
  \sC (f) = \sum_{k=1}^N \l_k f(\xb_k), \quad \hbox{such that} \quad \CI f = \sC(f), \quad \forall f \in \Pi_m^2,
$$
where $\Pi_m^2$ denote the space of algebraic polynomials of degree $m$ in two variables. The points $\x_k$ are 
called nodes and $\l_k$ weights of the cubature rule. Cubature rules are essential for discretizing integrals and provide 
accurate numerical approximation to integrals. The most useful one has a high degree of precision, equivalently, 
has fewer nodes for a fixed degree of precision. A cubature rule is called {\it minimal} if the number of nodes 
is the smallest among all cubature rules of the same degree for $\CI f$. 

For integral in one variable, the Gauss quadrature rule of degree $2n-1$ is minimal with $n$ nodes and exists
for every integral $\CI f$ on an interval. Moreover, its nodes are zeros of the orthogonal polynomial of degree $n$. 
The same, however, does not hold for cubature rules. It is known that the number of nodes, $N$, of a cubature rule 
of degree $2n-1$ satisfies a lower bound (\cite{My, St})
\begin{equation} \label{eq:old_lwbd}
      N \ge \dim \Pi_{n-1}^2 = \binom{n+1}{2}. 
\end{equation}
We call a cubature rule that attains the lower bound Gauss cubature rule. It is known that a Gauss cubature 
rule of degree $2n-1$ exists if, and only if, its nodes are common zeros of all orthogonal polynomials of degree $n$
in two variables, which means that $n+1$ polynomial curves of degree $n$ intersects at $\frac12 n(n+1)$ real points. 
Thus, it is not surprising that Gause cubature rules exist rarely, They do not exist, for example, if $W$ and $\Omega$ 
are symmetric with respect to the origin, called centrally symmetric. Moreover, only two families of Gauss cubature 
rules are known to exist for integrals of two variables \cite{LSX, SX94}. In both cases, the corresponding orthogonal 
polynomials are first studied by Koornwinder in his classical papers \cite{K74a} and \cite{K74b} based on symmetric 
functions. 

For the centrally symmetric case, the nonexistence of the Gauss cubature rules can be seen from a lower bound for 
the number of nodes stronger than that of \eqref{eq:old_lwbd}. The bound is called M\"oller's lower bound \cite{Moller} 
and it states that if $W$ and $\Omega$ are centrally symmetric, then the number of nodes $N$ of a cubature rule of 
degree $2n-1$ for $\CI(f)$ satisfies 
\begin{equation} \label{eq:Moller}
   N \ge  \binom{n+1}{2} + \left  \lfloor \frac n 2 \right \rfloor. 
\end{equation}
The first family \footnote{By a family, we mean a family of cubature rules of degree $n$ for all $n$ in $\NN$ or for $n$
in an infinite subset of $\NN$.} of cubature rules that satisfy M\"oller's lower bound, which is then minimal, is given in 
\cite{MP} for the product Chebyshev weight function on the square $[-1,1]^2$ for even $n$. This family has been 
completed and extended to other product Chebyshev weight functions and derived via several methods; see 
\cite{BP, CoolS, LSX09, Sch, X94}. These remain, however, only known families of minimal cubature rules until a large 
collection of weight functions that admit minimal cubature rules on $[-1,1]^2$ is identified in \cite{X12a}. This latter 
collection has its roots in the families of Koornwinder polynomials in \cite{K74a}, which will be described below. 

The paper is organized as follows. In the next section, we describe Koornwinder polynomials and Gauss cubature 
rules. Minimal cubature rules are discussed in the third section, where the examples are given via an involution of
integrals on the square, which allows us to tap into the properties of Koornwinder polynomials. Further minimal cubature
rules are given in the fourth section. 

\section{Koornwinder polynomials and Gauss cubature rules}
\setcounter{equation}{0}

Let $W$ be a nonnegative weight function on a compact domain $\Omega$ of $\RR^2$. We consider orthogonal polynomials 
with respect to the inner product 
$$
  \la f, g\ra_W = \int_\Omega f(x_1,x_2) W(x_1,x_2) \d x_1 \d x_2. 
$$
Let $\CV_n^2$ be the space of orthogonal polynomials of degree $n$. Then $\dim \CV_n^2 = n+1$ as shown by applying 
the Gram-Schmidt process on $\{x_1^n, x_1^{n-1}x_2, \ldots, x_1 x_2^{n-1}, x_2^n\}$. We refer to properties of orthogonal
polynomials in two or more variables to \cite{DX}. 

We first recall the definition of the Koornwinder polynomials. There are two families of these polynomials, 
both coming from symmetric polynomials. We shall consider only one family, first studied in \cite{K74a}. 

\subsection{Koornwinder polynomials}
Let $w$ be a nonnegative weight function on $[-1,1]$. The mapping $\x = (x_1,x_2) \mapsto \u = (u_1,u_2)$, given by 
\begin{equation} \label{eq:x-u}
u_1 = x_1+x_2 \quad \hbox{and}\quad u_2 = x_1 x_2,
\end{equation}
maps the triangle $\triangle = \{(x_1,x_2): x_1 \le x_2\}$, which is half of $[-1,1]^2$, one-to-one and onto the domain 
$\Omega$ give by
\begin{equation}\label{eq:curved}
  \Omega = \left \{(u_1,u_2): u_1^2 \ge 4 u_2, 1 + u_2 \ge |u_1|, \quad -2 \le u_1 \le 2 \right \},
\end{equation}
which is bounded by two lines and a parabola. For $\g \ge  -\f12$, let $W_\g$ be the weight function defined
on $\Omega$ by 
\begin{equation} \label{eq:Wcurved}
   \CW_\g (u_1,u_2) = w(x_1)w(x_2) (u_1^2 - 4 u_2)^\g, \quad (u_1,u_2) \in \Omega 
\end{equation}
under the mapping \eqref{eq:x-u}. Since the map $\ub\mapsto \xb$ maps $\Omega$ to $\triangle$ that is half of the 
product domain $[-1,1]^2$, we obtain
\begin{equation} \label{eq:SymmIntd=2}
  \int_\Omega f(\u) \CW_\g(\u) \d \u =
     \frac12 \int_{\triangle} f(x_1+x_2,x_1x_2) w(x_1)w(x_2) |x_1-x_2|^{2\g+1}\d \x,
\end{equation}
since the Jacobian of the map is $|x_1-x_2|= \sqrt{u_1^2-4u_2}$. 

Let $\CV_n(\CW_\g)$ denote the space of orthogonal polynomials of degree $n$ with respect to $\CW_\g$ on
the domain $\Omega$. As shown in \cite{K74a}, an orthogonal basis can be given explicitly when 
$\g = \pm \f12$. 

\begin{prop} \label{prop:OPbiangle}
Let $p_n=p_n(w)$ be the orthonormal polynomial of degree $n$ with respect to $w$. Under the mapping \eqref{eq:x-u},
define 
\begin{equation} \label{eq:SymmOP-12}
  P_k^{n,-\f12}(u_1,u_2) = \begin{cases} p_n(x_1) p_k(x_2) + p_n(x_2) p_k(x_1), & k < n, \\
          \sqrt{2} p_n(x_1)p_n(x_2), & k = n. \end{cases}
\end{equation} 
Then $\{P_k^{n,-\f12}: 0 \le k \le n\}$ is an orthonormal basis of $\CV_n(\CW_{-\f12})$. Moreover, under the mapping
\eqref{eq:x-u}, define 
\begin{equation} \label{eq:SymmOP12}
  P_k^{n, \f12}(u_1,u_2) = \frac{p_{n+1} (x_1) p_k(x_2) - p_{n+1} (x_2) p_k(x_1)}{x_1-x_2}, \quad 0 \le k \le n. 
\end{equation} 
Then $\{P_k^{n,\f12}: 0 \le k \le n\}$ is an orthonormal basis of $\CV_n(\CW_{\f12})$. 
\end{prop}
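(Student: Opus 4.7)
The plan is to reduce every assertion — polynomiality, $u$-degree, orthogonality, and normalization — to the one-variable orthonormality of $\{p_n(w)\}$ via the substitution $\xb\mapsto\ub$ and formula \eqref{eq:SymmIntd=2}. Polynomiality is the easy half: for $\g=-\f12$ the expression $p_n(x_1)p_k(x_2)+p_n(x_2)p_k(x_1)$ is symmetric in $(x_1,x_2)$, hence a polynomial in the elementary symmetric functions $u_1=x_1+x_2$ and $u_2=x_1 x_2$. For $\g=\f12$ the numerator is antisymmetric and vanishes on $x_1=x_2$, so the quotient is polynomial; being a ratio of two antisymmetric polynomials it is symmetric, hence again lies in $\RR[u_1,u_2]$.

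To establish the $u$-degree I would prove the following lemma: by Newton's recursion $x_1^m+x_2^m=u_1(x_1^{m-1}+x_2^{m-1})-u_2(x_1^{m-2}+x_2^{m-2})$, an induction shows that the power sum $x_1^m+x_2^m$ expands in $(u_1,u_2)$ as a polynomial of ordinary total degree $m$ with leading monomial $u_1^m$. Consequently, for $i\ge j$ the identity $x_1^i x_2^j+x_1^j x_2^i=u_2^j(x_1^{i-j}+x_2^{i-j})$ has $u$-degree exactly $i$ with leading monomial $u_2^j u_1^{i-j}$, and for $i>j$ the antisymmetric version $\frac{x_1^i x_2^j-x_1^j x_2^i}{x_1-x_2}=u_2^j\frac{x_1^{i-j}-x_2^{i-j}}{x_1-x_2}$ has $u$-degree $i-1$. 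Tracking leading coefficients of the $p_m$'s then gives $\deg_u P_k^{n,-\f12}=n$ (via $i=n,j=k$) and $\deg_u P_k^{n,\f12}=n$ (via $i=n+1,j=k$).

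For orthogonality to $\Pi_{n-1}^2$, I would use \eqref{eq:SymmIntd=2} together with $\int_\triangle=\f12\int_{[-1,1]^2}$ for symmetric integrands to rewrite the inner product with $g(u_1,u_2)$ as a constant multiple of the integral on $[-1,1]^2$ with weight $w(x_1)w(x_2)|x_1-x_2|^{2\g+1}$. For $\g=-\f12$ the weight reduces to the product weight, and the pairing with the summand $p_n(x_1)p_k(x_2)$ factors as $\int p_n(x_1)\bigl[\int p_k(x_2)g(x_1+x_2,x_1x_2)w(x_2)\,dx_2\bigr]w(x_1)\,dx_1$; the bracketed factor has degree at most $\deg_u g\le n-1$ in $x_1$, so the outer integral vanishes by the 1D orthogonality of $p_n$, and the symmetric summand $p_n(x_2)p_k(x_1)$ is handled identically. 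For $\g=\f12$ the factor $|x_1-x_2|^2=(x_1-x_2)^2$ in the weight exactly cancels the single denominator of $P_k^{n,\f12}$ to leave $(x_1-x_2)$; antisymmetrizing reduces the pairing to $2\int p_{n+1}(x_1)p_k(x_2)(x_1-x_2)g(\ub)\,w\otimes w\,dx$, whose inner $x_1$-integrand has degree at most $(n-1)+1=n$ and is killed by the 1D orthogonality of $p_{n+1}$.

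Finally, expanding $\la P_k^{n,\g},P_{k'}^{n,\g}\ra$ produces at most four pieces that collapse to $\da_{kk'}$ by one-variable orthonormality, with the $\sqrt2$ factor on $P_n^{n,-\f12}$ precisely neutralizing the diagonal doubling at $k=k'=n$; the same tensor-square calculation gives the $\g=\f12$ orthonormality. Since $\dim\CV_n(\CW_\g)=n+1$ and we have exhibited $n+1$ orthonormal elements, each family is automatically an orthonormal basis of $\CV_n(\CW_\g)$. The single nontrivial step is the degree lemma: one must recognise that the ordinary $u$-degree (not the weighted grading inherited from the $x$-degree) of $p_n(x_1)p_k(x_2)+p_n(x_2)p_k(x_1)$ equals $n$ rather than $n+k$. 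Once this is in hand, everything else is one-variable bookkeeping.
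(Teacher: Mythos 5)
Your proof is correct. The paper itself offers no argument for this proposition---it is stated with a pointer to Koornwinder's original work \cite{K74a}---so there is nothing in the text to compare against; your reduction to one-variable orthonormality via the change of variables \eqref{eq:x-u} and the integral identity \eqref{eq:SymmIntd=2} is the standard route, and every step checks out. The one genuinely non-obvious point, that the ordinary $u$-degree of $p_n(x_1)p_k(x_2)+p_n(x_2)p_k(x_1)$ is $\max(n,k)=n$ rather than $n+k$, you handle correctly via Newton's recursion, including the non-cancellation of the distinct leading monomials $u_2^j u_1^{n-j}$. One incidental remark: as printed, \eqref{eq:SymmIntd=2} carries a factor $\f12\int_\triangle$, whereas the Jacobian computation gives $\int_\Omega f\,\CW_\g\,\d\u=\int_\triangle\cdots=\f12\int_{[-1,1]^2}\cdots$; your reading $\int_\triangle=\f12\int_{[-1,1]^2}$ is the one consistent with the stated normalizations (e.g.\ the $\sqrt2$ in the $k=n$ case), so you have implicitly corrected a typo rather than introduced an error.
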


\begin{exam} \label{ex:W-Jac}
If $w(t) = (1-t)^\a(1+t)^\b$ is the Jacobi weight function on $[-1,1]$, then the weight $W_\g$ becomes
\begin{equation*}  
 \CW_{\a,\b,\g}(u_1,u_2) = (1-u_1+u_2)^\a (1+u_1+u_2)^\b (u_1^2- 4 u_2)^\g, \quad \a,\b> -1,
\end{equation*}
on the domain $\Omega$ given in \eqref{eq:curved}, since $(1\pm x_1)(1\pm x_2) = 1 \pm u_1 +u_2$.
\end{exam}

The above definition is also valid if the support of $w$ is unbounded. 

\subsection{Gauss cubature rules}

Let $\PP_n =  \{P_1^n, \ldots, P_n^n\}$ be an orthonormal basis for the space $\CV_n^2$. A zero of $\PP_n$ is a 
point $\x \in \RR^2$ that is a zero of every element in $\CV_n^2$. The zeros of $\PP_n$ characterize the Gauss 
cubature rule of degree $2n-1$ \cite{My}. 

\begin{thm} \label{thm:GaussCuba}
A Gauss cubature exists if, and only if, $\PP_n$ has $\dim \Pi_{n-1}^2$ real, distinct zeros. 
\end{thm}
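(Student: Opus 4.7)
The plan is to prove the two implications separately. The forward direction will be a direct consequence of orthogonality together with the exactness of the cubature on $\Pi_{2n-1}^2$; the reverse direction will construct the cubature from Lagrange interpolants, and the main obstacle will be showing that the common zeros form a $\Pi_{n-1}^2$-poised set.

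\emph{Necessity.} Assume a Gauss cubature $\sC(f)=\sum_{k=1}^N\l_k f(\x_k)$ with $N=\binom{n+1}{2}$ nodes exists. I would first show the nodes are $\Pi_{n-1}^2$-poised: if $Q\in\Pi_{n-1}^2$ vanishes at every $\x_k$, then $Q^2\in\Pi_{2n-2}^2\subset\Pi_{2n-1}^2$ and
\[
\|Q\|_W^2=\CI(Q^2)=\sC(Q^2)=\sum_k\l_k Q(\x_k)^2=0,
\]
forcing $Q\equiv 0$. Hence Lagrange fundamental polynomials $\ell_j\in\Pi_{n-1}^2$ with $\ell_j(\x_k)=\delta_{jk}$ exist, and $\l_j=\sC(\ell_j^2)=\CI(\ell_j^2)=\|\ell_j\|_W^2>0$ gives positivity of the weights. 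Finally, for any $P\in\CV_n^2$ and $Q\in\Pi_{n-1}^2$ one has $PQ\in\Pi_{2n-1}^2$ and $\la P,Q\ra_W=\sC(PQ)=\sum_k\l_k P(\x_k)Q(\x_k)$, while $\la P,Q\ra_W=0$ by orthogonality. Taking $Q=\ell_j$ forces $\l_j P(\x_j)=0$ and hence $P(\x_j)=0$, so the $N$ distinct nodes are real common zeros of $\PP_n$.

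\emph{Sufficiency.} Assume $\PP_n$ has $N$ distinct real common zeros $\x_1,\ldots,\x_N$. I would proceed in three steps: (a) show that the nodes are $\Pi_{n-1}^2$-poised; (b) define $\l_k:=\CI(\ell_k)$, where $\ell_k\in\Pi_{n-1}^2$ is the Lagrange interpolant satisfying $\ell_k(\x_j)=\delta_{jk}$; (c) verify exactness on $\Pi_{2n-1}^2$. Granting (a), step (b) is immediate, exactness on $\Pi_{n-1}^2$ is built into the definition, and $\sum_k\l_k=\CI(1)$ follows from $\sum_k\ell_k\equiv 1$. For (c), I would expand $f\in\Pi_{2n-1}^2$ as $f=\sum_{m=0}^{2n-1}f_m$ with $f_m\in\CV_m^2$. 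The summands with $m\le n-1$ are treated by exactness on $\Pi_{n-1}^2$; for $m=n$, $f_n$ vanishes at every node, so $\sC(f_n)=0=\CI(f_n)$; and for $n+1\le m\le 2n-1$, the three-term recurrence $x_i\PP_m=A_{m,i}\PP_{m+1}+B_{m,i}\PP_m+A_{m-1,i}^\Tr\PP_{m-1}$ evaluated at a node $\x_k$ (where $\PP_n(\x_k)=0$) reduces $\PP_m(\x_k)$ by downward induction to values of polynomials of degree $\le n-1$ at $\x_k$, after which exactness on $\Pi_{n-1}^2$ yields $\sC(f_m)=0=\CI(f_m)$.

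\emph{The chief obstacle is step~(a).} With no cubature yet available, poisedness must be extracted directly from the common-zero hypothesis. My plan would be algebraic: suppose for contradiction $0\ne Q\in\Pi_{n-1}^2$ vanishes on every $\x_k$, and decompose $x_iQ=Q_i+P_i$ with $Q_i\in\Pi_{n-1}^2$ and $P_i\in\CV_n^2$; evaluating at $\x_k$ shows $Q_i$ also vanishes at every node, so the subspace $J:=\{Q\in\Pi_{n-1}^2:Q(\x_k)=0\,\forall k\}$ is invariant under the two commuting ``truncated multiplication'' operators $\tilde x_i\colon Q\mapsto Q_i$. Via the standard correspondence between the joint spectrum of these operators on $\Pi_{n-1}^2$ and the common zeros of $\PP_n$, the existence of $N=\binom{n+1}{2}$ distinct common zeros forces $\tilde x_1,\tilde x_2$ to be jointly diagonalized with the evaluation vectors $(P_j^m(\x_k))_{m\le n-1,\,j\le m}$ forming an eigenbasis, whence $J=0$. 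Tightening this spectral correspondence so as to extract poisedness cleanly is the technical heart of the proof and the step where I would expect to spend the most care.
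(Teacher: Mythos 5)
The paper does not actually prove this theorem: it is quoted as Mysovskikh's classical result with a citation to \cite{My}, so there is no in-paper argument to compare against. Your proposal reconstructs the standard proof, and its architecture is sound. The necessity half is complete as written: poisedness from $\CI(Q^2)=0$, positivity of the weights from $\l_j=\CI(\ell_j^2)$, and $P(\x_j)=0$ from testing $\la P,\ell_j\ra_W$ is exactly the classical argument. For step (a) of sufficiency, your spectral plan also closes, and in fact more easily than you suggest: the vectors $v_k=(\PP_0(\x_k),\dots,\PP_{n-1}(\x_k))$ are joint eigenvectors of the truncated block Jacobi matrices with joint eigenvalues $\x_k$ (the three-term relation at degree $n-1$ loses its top term because $\PP_n(\x_k)=0$), and joint eigenvectors belonging to \emph{distinct} joint eigenvalues are linearly independent by the usual minimal-dependence argument -- no commutativity or prior diagonalizability of $\tilde x_1,\tilde x_2$ is needed. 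Since there are $N=\dim\Pi_{n-1}^2$ of them, the evaluation map is invertible and $J=0$.

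The one genuine soft spot is step (c). As sketched, the downward recurrence does not control degrees: solving the three-term relation for $\PP_m$ (which already requires the full column rank of the stacked matrix $\bigl(A_{m-1,1}^\Tr\ A_{m-1,2}^\Tr\bigr)^\Tr$, a fact you should invoke) produces terms $x_i\,R$ with $R\in\Pi_{n-1}^2$, which overshoot into degree $n$; iterating naively, the ``remainder'' can have degree up to $m$, and your final step $\sC(f_m)=\CI(R_c)=0$ is unjustified unless you also know that $f_m-R_c$ is a combination $\sum_j q_jP_j^n$ with $\deg q_j\le n-1$ (so that orthogonality kills $\CI(q_jP_j^n)$). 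The fix is to re-split the degree-$n$ overflow along $\Pi_n^2=\Pi_{n-1}^2\oplus\CV_n^2$ at each stage, or, more cleanly, to prove by upward induction on $m$ (using $\Pi_m^2=\Pi_{m-1}^2+x_1\Pi_{m-1}^2+x_2\Pi_{m-1}^2$) that
\[
\Pi_{2n-1}^2=\Pi_{n-1}^2+\Pi_{n-1}^2\cdot\operatorname{span}\PP_n .
\]
Writing $f=r+\sum_j q_jP_j^n$ with $r,q_j\in\Pi_{n-1}^2$ then gives $\CI f=\CI r=\sC r=\sC f$ in one line, bypassing the orthogonal decomposition $f=\sum f_m$ entirely. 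With that lemma in place your proof is complete and coincides with the standard one.
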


If we regard $\PP_n$ as a column vector, then the orthogonal polynomials satisfy a three-term relation in a vector-matrix form
\cite[Chapter 3]{DX}
$$
   x_i \PP_n = A_{n,i} \PP_{n+1} + B_{n,i} \PP_n + A_{n-1,i}^\Tr \PP_{n-1}, \qquad i = 1,2,
$$
where $A_{n,i}: (n+1) \times (n+2)$ and $B_{n,i}: (n+1) \times (n+1)$ are called structural matrices. If $W$ and $\Omega$ 
are centrally symmetric, then $B_{n,i} = 0$. The structural matrices can be used to study the zeros of $\PP_n$ \cite{X94b}.

\begin{thm} \label{thm:GaussCuba2}
Let $\PP_n$ be a basis of $\CV_n^2$. Then $\PP_n$ has $\dim \Pi_{n-1}^2$ real zeros if and only if 
\begin{equation} \label{eq:AA=AA}
  A_{n-1,1} A_{n-1,2}^\Tr = A_{n-1,2} A_{n-1,1}^\Tr. 
\end{equation}
In particular, the Gauss cubature rule for $W$ exists if and only if \eqref{eq:AA=AA} holds. 
\end{thm}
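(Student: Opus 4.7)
The plan is to reinterpret common zeros of $\PP_n$ as joint eigenvectors of a pair of truncated block Jacobi matrices, and then to identify exactly one obstruction to their commutation.

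First I would introduce the block-tridiagonal symmetric matrix
$$
   J_{n,i} = \begin{pmatrix} B_{0,i} & A_{0,i} & & \\ A_{0,i}^\Tr & B_{1,i} & \ddots & \\ & \ddots & \ddots & A_{n-2,i} \\ & & A_{n-2,i}^\Tr & B_{n-1,i} \end{pmatrix}, \qquad i=1,2,
$$
of size $N\times N$, where $N = \dim \Pi_{n-1}^2 = \binom{n+1}{2}$. For $\xb\in\RR^2$ set $V(\xb) = (\PP_0(\xb)^\Tr,\ldots,\PP_{n-1}(\xb)^\Tr)^\Tr$. The three-term relations for $k=0,\ldots,n-2$ together with the relation at level $n-1$ show that $\PP_n(\xb)=0$ is equivalent to $J_{n,i}V(\xb) = x_i V(\xb)$ for $i=1,2$. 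Since $\PP_0$ is a nonzero constant, $V(\xb)\ne 0$, so common zeros of $\PP_n$ correspond bijectively to joint eigenvectors of the pair $(J_{n,1},J_{n,2})$, with eigenvalues $(x_1,x_2)$.

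For the necessity direction, assume $\PP_n$ has $N$ real common zeros $\xb^{(1)},\ldots,\xb^{(N)}$. The vectors $V(\xb^{(j)})$ are joint eigenvectors of $(J_{n,1},J_{n,2})$; a standard argument (using, for instance, that any $p\in\Pi_{n-1}^2$ is determined by its values at these nodes via the reproducing kernel for $\CV_0^2\oplus\cdots\oplus\CV_{n-1}^2$) gives their linear independence in $\RR^N$. The pair is therefore simultaneously diagonalizable and, in particular, commutes.

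The core of the proof is the commutator computation. Writing out $x_1 x_2 \PP_k = x_2 x_1 \PP_k$ and expanding both sides by the three-term relation, and comparing the coefficients of $\PP_{k+2}$, $\PP_{k+1}$, and $\PP_k$, one obtains the universal identities
$$
   A_{k,1}A_{k+1,2}=A_{k,2}A_{k+1,1}, \qquad B_{k,1}A_{k,2}+A_{k,1}B_{k+1,2}=B_{k,2}A_{k,1}+A_{k,2}B_{k+1,1},
$$
$$
   A_{k-1,1}^\Tr A_{k-1,2}+B_{k,1}B_{k,2}+A_{k,1}A_{k,2}^\Tr = A_{k-1,2}^\Tr A_{k-1,1}+B_{k,2}B_{k,1}+A_{k,2}A_{k,1}^\Tr.
$$
Block by block, these identities make every off-diagonal block of $[J_{n,1},J_{n,2}]$ vanish, as well as every diagonal block at positions $(k,k)$ with $k\le n-2$. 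The corner block at position $(n-1,n-1)$ is the unique place where the truncation matters: the terms $A_{n-1,1}A_{n-1,2}^\Tr$ and $A_{n-1,2}A_{n-1,1}^\Tr$ that would have cancelled via the diagonal identity at $k=n-1$ are absent from $J_{n,i}$, leaving
$$
   [J_{n,1},J_{n,2}]_{(n-1,n-1)} = A_{n-1,2}A_{n-1,1}^\Tr - A_{n-1,1}A_{n-1,2}^\Tr.
$$
Hence $J_{n,1}$ and $J_{n,2}$ commute if and only if \eqref{eq:AA=AA} holds. For the sufficiency direction, when \eqref{eq:AA=AA} holds the commuting symmetric matrices admit a common orthogonal diagonalization, producing $N$ joint eigenvectors with real eigenvalues $(x_1^{(j)},x_2^{(j)})$; each is of the form $V(\xb^{(j)})$ for a common zero of $\PP_n$ by the correspondence in the first paragraph. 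The claim about the Gauss cubature rule then follows from \thmref{thm:GaussCuba}.

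The main obstacle is the bookkeeping for the commutator in the third paragraph, in particular verifying that the three universal identities account for the cancellation of every block of $[J_{n,1},J_{n,2}]$ except the corner one, so that the matrix condition \eqref{eq:AA=AA} is not just necessary but in fact equivalent to commutation.
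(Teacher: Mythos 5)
Your overall strategy --- encoding common zeros of $\PP_n$ as joint eigenvectors of the truncated block Jacobi matrices $J_{n,1},J_{n,2}$ and isolating the $(n-1,n-1)$ block of the commutator --- is exactly the argument of the cited source \cite{X94b} (see also \cite{DX}, Chapter 3); the present paper states the theorem without proof. Your commutator bookkeeping is correct: the three identities are the coefficient comparisons in $x_1x_2\PP_k = x_2x_1\PP_k$, they annihilate every block of $[J_{n,1},J_{n,2}]$ except the last diagonal one, and that block is indeed $A_{n-1,2}A_{n-1,1}^\Tr - A_{n-1,1}A_{n-1,2}^\Tr$. The necessity direction is also sound once the Christoffel--Darboux argument for the linear independence (in fact mutual orthogonality) of the vectors $V(\xb^{(j)})$ is written out.

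There is, however, a genuine gap in the sufficiency direction. Your first paragraph shows that a point $\xb$ with $\PP_n(\xb)=0$ produces the joint eigenvector $V(\xb)$; it does not show that an arbitrary joint eigenvector $v=(v_0,\ldots,v_{n-1})$ of the commuting pair is a scalar multiple of some $V(\xb)$, so the phrase ``by the correspondence in the first paragraph'' does not close the argument. The missing ingredient is the full column rank of the stacked structural matrices: the $(2k+2)\times(k+2)$ matrix formed by placing $A_{k,1}$ above $A_{k,2}$ has rank $k+2$, which holds because the projections of $x_1\CV_k^2$ and $x_2\CV_k^2$ together span $\CV_{k+1}^2$ for a positive definite moment functional. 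With this one argues inductively up the blocks: $v_0=c\PP_0$, and if $v_j=c\PP_j(\xb)$ for $j\le k$, the $k$-th block row of $J_{n,i}v=x_iv$ gives $A_{k,i}\left(v_{k+1}-c\PP_{k+1}(\xb)\right)=0$ for $i=1,2$, forcing $v_{k+1}=c\PP_{k+1}(\xb)$; the last block row then yields $A_{n-1,i}\PP_n(\xb)=0$ for $i=1,2$, and the rank condition at level $n-1$ gives $\PP_n(\xb)=0$. The same rank fact is what justifies the ``only if'' half of the equivalence you assert in the first paragraph, which as stated only delivers $A_{n-1,i}\PP_n(\xb)=0$. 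Finally, one should record that distinct orthonormal eigenvectors yield distinct nodes, since each joint eigenspace is spanned by a single $V(\xb)$; this gives the distinctness required in Theorem~\ref{thm:GaussCuba}.
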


Since matrices do not usually commute, the Gauss cubature rules exist rarely. The first families of such rules are given in 
\cite{SX94} based on the zeros of Koornwinder polynomials. Let $P_k^{n, \pm\f12}$ be the orthogonal polynomials 
for $W_{\pm \f12}$ given in Proposition \ref{prop:OPbiangle}. We write
$$
\PP_n^{\pm \f12} = \left(P_0^{n, \pm \f12}, \ldots P_n^{n, \pm \f12}\right). 
$$  
Let $t_{k,n}$, $1 \le k\le n$, be the zeros of $p_n(w)$ and define 
$$
   u_{j,k}^{(n)} = t_{j,n} +t_{k,n} \quad \hbox{and} \quad v_{j,k}^{(n)} = t_{j,n} t_{k,n}, \quad 1 \le k,j \le n.
$$

\begin{prop} \label{prop:GaussZero}
 The polynomials in $\PP_n^{-\f12}$ have $\binom{n+1}{2}$ common zeros given by 
\begin{equation} \label{eq:GaussZ-12}
     Z_n^{-\f12} :=  \left \{ \left (u_{j,k}^{(n)}, v_{j,k}^{(n)} \right): 1 \le j \le k \le  n \right\}
 \end{equation}
and the polynomials in $\PP_n^{\f12}$ have $\binom{n+1}{2}$ common zeros given by  
\begin{equation} \label{eq:GaussZ+12}
   Z_n^{\f12} := \left \{ \left (u_{j,k}^{(n+1)}, v_{j,k}^{(n+1)} \right): 1 \le j \le k-1,\,\, k \le n \right\}.
\end{equation}
\end{prop}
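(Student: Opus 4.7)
Both parts reduce to Theorem~\ref{thm:GaussCuba}: once I exhibit $\binom{n+1}{2} = \dim \Pi_{n-1}^2$ distinct real points that are common zeros of every element of $\PP_n^{\pm \f12}$, they must constitute the full common zero set, since the bound \eqref{eq:old_lwbd} forbids more. The explicit formulas \eqref{eq:SymmOP-12} and \eqref{eq:SymmOP12} of Proposition~\ref{prop:OPbiangle} make finding these zeros a matter of substitution in the $(x_1,x_2)$ variables and transporting through the map \eqref{eq:x-u}.

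For $\g = -\f12$, fix indices $1 \le j \le k \le n$ and set $x_1 = t_{j,n}$, $x_2 = t_{k,n}$. Each $P_\ell^{n,-\f12}$ in \eqref{eq:SymmOP-12} is a sum (or product) in which every term carries $p_n(x_1)$ or $p_n(x_2)$ as a factor, so the value is $0$. Hence the image $(u_{j,k}^{(n)}, v_{j,k}^{(n)})$ lies in the common zero set. To count distinct images I use that $(u_1,u_2)$ determines the unordered pair $\{x_1,x_2\}$ as the roots of $t^2 - u_1 t + u_2 = 0$; combined with the distinctness of the $t_{j,n}$, this shows the map $\{j,k\} \mapsto (u_{j,k}^{(n)}, v_{j,k}^{(n)})$ is injective on unordered pairs. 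The pairs with $j \le k$ number exactly $\binom{n+1}{2}$, which matches the dimension bound and thus identifies $Z_n^{-\f12}$ with the full common zero set.

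For $\g = \f12$, I take unordered pairs of \emph{distinct} zeros of $p_{n+1}$: set $x_1 = t_{j,n+1}$ and $x_2 = t_{k,n+1}$ with $j \ne k$. The numerator in \eqref{eq:SymmOP12} vanishes because $p_{n+1}(x_1) = p_{n+1}(x_2) = 0$, while the denominator $x_1 - x_2 \ne 0$, so every $P_\ell^{n,\f12}$ vanishes at the image point. The same injectivity argument counts the unordered distinct pairs, giving $\binom{n+1}{2}$ distinct common zeros, which again exhausts the set by Theorem~\ref{thm:GaussCuba}. The one subtle point — the place where I expect the main obstacle — is the need to exclude diagonal pairs $j = k$, where \eqref{eq:SymmOP12} is an indeterminate form: a short L'H\^opital computation gives
\[
  \lim_{x_2 \to x_1} \frac{p_{n+1}(x_1) p_\ell(x_2) - p_{n+1}(x_2) p_\ell(x_1)}{x_1 - x_2} = p_{n+1}'(x_1) p_\ell(x_1),
\]
which for $\ell = 0$ is a nonzero constant times $p_{n+1}'(t_{j,n+1}) \ne 0$ (zeros of orthogonal polynomials are simple). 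Hence no diagonal pair contributes an extra common zero, the count is exactly $\binom{n+1}{2}$, and the proposition follows.
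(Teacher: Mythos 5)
Your proof is correct and follows essentially the same route as the paper, which likewise deduces the result by substituting the Gauss nodes $(t_{j,n},t_{k,n})$ (resp.\ distinct zeros of $p_{n+1}$) into the explicit formulas \eqref{eq:SymmOP-12} and \eqref{eq:SymmOP12} and transporting through the map \eqref{eq:x-u}. You usefully supply two details the paper leaves implicit --- the injectivity of $\{x_1,x_2\}\mapsto(u_1,u_2)$ for the count, and the L'H\^opital check that diagonal pairs are not zeros of $P_0^{n,\f12}$ --- and your reading $1\le j<k\le n+1$ for $Z_n^{\f12}$ is the correct one (the paper's ``$k\le n$'' is evidently a slip, as the count $\binom{n+1}{2}$ and Theorem~\ref{thm:GaussEx1d=2} confirm).
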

 
This can be deduced from the explicit formulas of $P_k^{n,\pm \f12}$ given in \eqref{eq:SymmOP-12} and
\eqref{eq:SymmOP12}. For example, in the case of $P_k^{n,-\f12}$, we see that every polynomial in the
right-hand side of \eqref{eq:SymmOP-12} contains a $p_n(w;x_1)$ or $p_n(w_2)$, so that it vanishes on
$(t_{j,n}, t_{k,n})$ and, consequently, the left-hand side vanishes on $(u_{j,k}^{(n)}, v_{j,k}^{(n)})$ under \eqref{eq:x-u}.
By Theorem \ref{thm:GaussCuba}, this proposition shows that the Gauss cubature 
rules exist for $W_{\pm \f12}$ on the domain $\Omega$. In \cite{SX94}, the condition \eqref{eq:AA=AA} is 
verified. The Gauss cubature rules can be given explicitly. Let the Gauss quadrature rule for $w$ be given by
$$
   \int_\RR f(t) w(t) \d t = \sum_{k=1}^n \l_{k,n} f(t_{k,n}), \qquad \forall f\in \Pi_{2n-1}.
$$

\begin{thm} \label{thm:GaussEx1d=2}
For $\CW_{- \frac12}$ on the domain $\Omega$, the Gaussian cubature rule of degree $2n-1$ is given by 
\begin{equation} \label{GaussCuba-}
  \int_{\Omega} f(\u) \CW_{-\frac12}(\u) \d \u = 2 \sum_{k=1}^n \mathop{ {\sum}' }_{j=1}^k  
          \l_{k,n} \l_{j,n} f\left(u_{j,k}^{(n)}, v_{j,k}^{(n)}\right), \quad f \in \Pi_{2n-1}^2, 
\end{equation}
where ${\sum}'$ means that the term for $j = k$ is divided by 2. For $\CW_{\frac12}$ on the domain
$\Omega$, the Gaussian cubature rule of degree $2n-1$ is given by 
\begin{equation} \label{GaussCuba+}
   \int_{\Omega} f(\u) \CW_{\frac12}(\u) \d \u = 2 \sum_{k=2}^{n+1} \sum_{j=1}^{k-1}  
          \l_{j,k}^{(n+1)} f\left(u_{j,k}^{(n+1)}, v_{j,k}^{(n+1)}\right), \quad f \in \Pi_{2n-1}^2, 
\end{equation} 
where $\lambda_{j,k}^{(n+1)} = \l_{j,n+1} \l_{k,n+1} (t_{j,n+1} - t_{k,n+1})^2$. 
\end{thm}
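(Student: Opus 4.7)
The plan is to pull each integral on $\Omega$ back to the square $[-1,1]^2$ via \eqref{eq:x-u}--\eqref{eq:SymmIntd=2}, and then apply a tensor-product of the one-dimensional Gauss quadrature for $w$. Write $g(x_1,x_2) := f(x_1+x_2, x_1 x_2)$. Because $u_1 = x_1+x_2$ and $u_2 = x_1 x_2$ are each of degree one in $x_i$, a monomial $u_1^a u_2^b$ with $a+b \le d$ pulls back to a polynomial of $x_i$-degree at most $d$ for $i=1,2$. Hence for $f \in \Pi_{2n-1}^2$, the function $g$ has coordinate degree $\le 2n-1$ in each variable and is symmetric under $x_1 \leftrightarrow x_2$.

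For the case $\g = -\tfrac12$, the right-hand side of \eqref{eq:SymmIntd=2} reduces to $g\cdot w(x_1)w(x_2)$ on the triangle, a symmetric integrand that I extend to all of $[-1,1]^2$ via the $x_1 \leftrightarrow x_2$ symmetry. I then apply the tensor-product Gauss quadrature
$$
  \int_{-1}^1\!\int_{-1}^1 h(x_1,x_2)\, w(x_1) w(x_2)\, \d x_1 \d x_2 = \sum_{j,k=1}^n \l_{j,n}\l_{k,n}\, h(t_{j,n},t_{k,n}),
$$
which is exact for $h$ of coordinate degree $\le 2n-1$. Since $(u_{j,k}^{(n)}, v_{j,k}^{(n)})$ is symmetric in $(j,k)$, the double sum collapses to a sum over $j \le k$, with the diagonal $j=k$ counted with weight $\tfrac12$; this is exactly the $\sum'$ notation in \eqref{GaussCuba-}, and the overall factor $2$ emerges from combining the constants in \eqref{eq:SymmIntd=2} with the symmetrization.

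For $\g = +\tfrac12$, the integrand picks up an extra factor $(x_1-x_2)^2$, so $g(x_1,x_2)(x_1-x_2)^2\, w(x_1)w(x_2)$ now has coordinate degree at most $2n+1$. The natural substitute is the tensor product of the $(n+1)$-point Gauss rule, which is exact up to coordinate degree $2(n+1)-1 = 2n+1$. The key observation is that $(t_{j,n+1}-t_{k,n+1})^2$ vanishes on the diagonal $j=k$, so those terms disappear automatically; the remaining sum over $j \ne k$ is symmetric and folds into pairs $j<k$, producing the weights $\l_{j,k}^{(n+1)} = \l_{j,n+1}\l_{k,n+1}(t_{j,n+1}-t_{k,n+1})^2$. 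Up to constant factors from \eqref{eq:SymmIntd=2} this is \eqref{GaussCuba+}. No new ideas beyond Proposition \ref{prop:GaussZero} and the one-dimensional Gauss rule are needed; the main thing requiring care is the bookkeeping of three accumulated factors --- the prefactor in \eqref{eq:SymmIntd=2}, the symmetrization from $\triangle$ to $[-1,1]^2$, and the folding of the double sum into pairs --- and, for $\g=+\tfrac12$, the recognition that going up to $n+1$ nodes precisely matches the degree raised by $(x_1-x_2)^2$ while simultaneously killing the diagonal.
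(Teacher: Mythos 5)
Your argument is correct in substance, but it is a genuinely different route from the one the paper takes. The paper obtains Theorem \ref{thm:GaussEx1d=2} structurally: Proposition \ref{prop:GaussZero} exhibits the $\binom{n+1}{2}$ common zeros of an orthonormal basis of $\CV_n(\CW_{\pm\f12})$, Theorem \ref{thm:GaussCuba} (Mysovskikh's characterization) then guarantees that a Gaussian cubature rule supported on exactly those points exists, and the explicit weights are quoted from \cite{SX94} (where they come out of the general theory, essentially as Christoffel numbers). You instead verify the identity directly: pull the integral back to $[-1,1]^2$ via \eqref{eq:x-u}--\eqref{eq:SymmIntd=2}, observe that $f\in\Pi_{2n-1}^2$ pulls back to a symmetric polynomial of \emph{coordinate} degree at most $2n-1$ (resp.\ $2n+1$ after multiplying by $(x_1-x_2)^2$), and apply the $n$-point (resp.\ $(n+1)$-point) tensor-product Gauss rule, which is exact on the full tensor space and not merely on total degree --- this is the one place where care is genuinely needed, and you handle it correctly. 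Your route is more elementary and self-contained, produces nodes and weights simultaneously, and makes the vanishing of the diagonal terms for $\g=\f12$ transparent; the paper's route buys the conceptual point of the section, namely that these rules are forced by the common-zero structure of the Koornwinder basis, and it is the approach that generalizes (e.g.\ to \cite{BSX}).

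One concrete issue: the overall constant does not simply ``emerge'' --- you should carry it through. With \eqref{eq:SymmIntd=2} read as $\f12\int_{[-1,1]^2}$ (the symmetric extension you invoke), the prefactor $\f12$ exactly cancels the factor $2$ produced by folding the symmetric double sum onto $j\le k$, so your computation yields \eqref{GaussCuba-} and \eqref{GaussCuba+} \emph{without} the leading factor $2$. A sanity check with $f=1$, $n=1$ and the Chebyshev weight gives $\int_\Omega \CW_{-\f12}\,\d\u=\pi^2/2$ while the right-hand side of \eqref{GaussCuba-} as printed equals $\pi^2$; so either the stated factor $2$ or the normalization in \eqref{eq:SymmIntd=2} carries a typo. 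This is a bookkeeping discrepancy in the source rather than a gap in your method --- indeed your method is precisely what pins the constant down --- but as written your proof asserts the factor $2$ rather than deriving it, and you should resolve the normalization explicitly.
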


In the case that $w = w_{\a,\b}$ is the Jacobi weight, the weight function $W_{\pm \f12}  = W_{\a,\b,\pm \f12}$ 
is given in Example \ref{ex:W-Jac}. In particular, for $\a = \b = -\f12$, the weight function is 
$$
   W_{-\f12, -\f12,-\f12} (u_1,u_2) = \left( (1+u_2)^2-u_1^2 \right)^{-\f12} (u_1^2-u_2)^{-\f12}. 
$$
 The nodes of the Gaussian cubature rule of degree $2n-1$ with $n = 20$ are depicted in Figure \ref{fig:parabola_nodes}.   
\begin{figure}[ht]  
\centering
\includegraphics[width=0.8\textwidth]{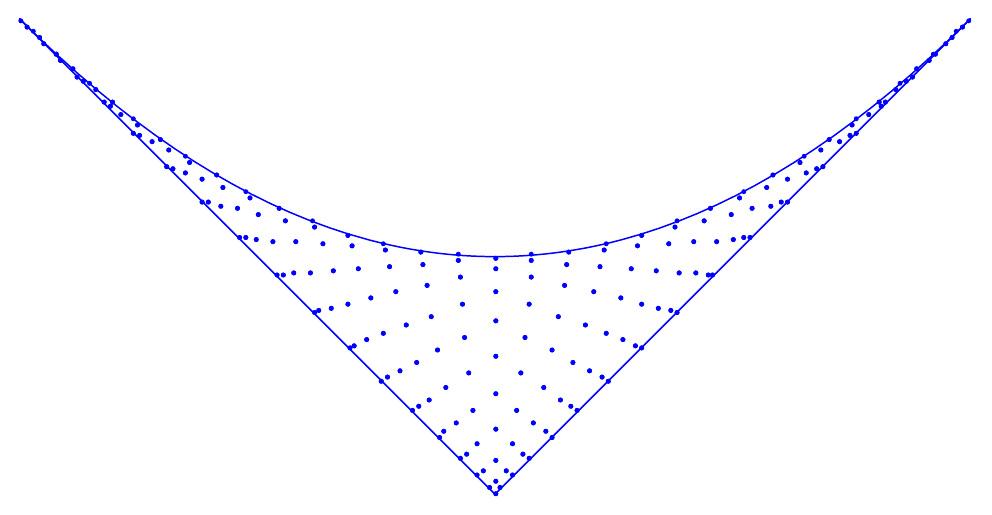}   
\caption{The nodes of Guass cubature of degree 39 for $W_{-\f12,-\f12,-\f12}$} \label{fig:parabola_nodes}
\end{figure}

It should be mentioned that this family of Gauss cubature rules is extended to higher dimensions in \cite{BSX}, 
which leads to the first family of Gaussian cubature rules in $\RR^d$. 

\section{Minimal cubature rules and orthogonal polynomials}
\setcounter{equation}{0}

Recall that a cubature rule for the integral $\CI f$ is minimal if it has the smallest number of nodes among all cubature rules 
of the same degree for $\CI f$. We discuss minimal cubature rules and give a family of examples on the square, which are 
associated with orthogonal polynomials originating from the Koornwinder polynomials

\subsection{Minimal cubature rules and M\"oller's lower boound}
 We need to know a lower bound for the number of nodes to verify that a cubature rule is 
minimal. If it exists, the Gauss cubature rule is minimal since its number of nodes attains the lower bound \eqref{eq:old_lwbd}. 

An important result by M\"oller \cite{Moller} gave an improved lower bound for the number of nodes of a cubature rule 
of degree $2n-1$, which can be made explicit in the centrally symmetric setting. The lower bound is formulated in 
\cite{X94} via the structural matrices of orthogonal polynomials and the result is given as follows.  

\begin{thm}\label{thm:Moller-lwbd}
The number of nodes of a cubature rule of degree $2n-1$ satisfies 
\begin{equation} \label{eq:Moller-lwbd2}
    N \ge \dim \Pi_{n-1}^2 +  \frac{1}{2} \rank \left [A_{n-1,1} A_{n-1,2}^\Tr - A_{n-1,2} A_{n-1,1}^\Tr \right]. 
\end{equation} 
In particular, if $W$ and $\Omega$ are centrally symmetric, then 
\begin{equation} \label{eq:Moller2}
 N \ge \dim \Pi_{n-1}^2 + \left \lfloor \frac n 2 \right \rfloor = \frac12 n(n+1) + \left \lfloor \frac n 2 \right \rfloor. 
\end{equation} 
\end{thm}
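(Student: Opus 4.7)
The plan is to establish \eqref{eq:Moller-lwbd2} by refining the standard ``orthogonality-in-$\RR^N$'' argument that already yields $N\ge\dim\Pi_{n-1}^2$, and then to specialize to the centrally symmetric setting. Fix a cubature rule $\sC$ of degree $2n-1$ with nodes $\xb_1,\dots,\xb_N$ and weights $\l_1,\dots,\l_N$, assumed positive for clarity (signed weights are handled similarly via a symmetric bilinear form on $\RR^N$). Define the evaluation map $T\colon \Pi_{n+1}^2\to \RR^N$, $T(f) = \bigl(\sqrt{\l_k}\,f(\xb_k)\bigr)_k$. The cubature identity gives $\la T(f),T(g)\ra_{\RR^N} = \CI(fg)$ whenever $fg\in\Pi_{2n-1}^2$.

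First I would extract the crude bound $N\ge \dim\Pi_{n-1}^2 + (n+1) - \dim U$, where $U := \{P\in\CV_n^2 : P(\xb_k)=0 \text{ for all } k\}$. This uses two routine facts: (i) $T|_{\Pi_{n-1}^2}$ is injective, since $Q\in\Pi_{n-1}^2$ with $TQ=0$ forces $\CI(Q^2)=\sC(Q^2)=0$ and hence $Q=0$; and (ii) $T(\CV_n^2)\perp T(\Pi_{n-1}^2)$, since for $P\in\CV_n^2$ and $Q\in\Pi_{n-1}^2$ we have $PQ\in\Pi_{2n-1}^2$, so $\la TP,TQ\ra=\CI(PQ)=0$. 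Counting dimensions in the direct sum inside $\RR^N$ yields the crude bound.

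To sharpen this to \eqref{eq:Moller-lwbd2}, I would harvest additional orthogonal vectors out of $T(\Pi_{n+1}^2)$. For $P=\eb^{\Tr}\PP_n\in U$ both $x_1 P$ and $x_2 P$ vanish at every node; by the three-term recurrence their $\CV_{n+1}^2$-components are $\eb^{\Tr}A_{n,i}\PP_{n+1}$. Expanding the polynomial identity $x_1(x_2 P)=x_2(x_1 P)$ and comparing $\PP_n$-components produces the skew bilinear form
\begin{equation*}
  B(\eb,\fb) := \eb^{\Tr}\bigl(A_{n-1,1}A_{n-1,2}^{\Tr}-A_{n-1,2}A_{n-1,1}^{\Tr}\bigr)\fb
\end{equation*}
on $\RR^{n+1}$; the key lemma is that the coefficient vectors corresponding to $U$ form a $B$-isotropic subspace. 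Since a skew form of rank $r$ admits isotropic subspaces of dimension at most $(n+1)-r/2$, one gets $\dim U\le (n+1)-\tfrac12\rank B$, and combining this with the crude bound yields \eqref{eq:Moller-lwbd2} exactly. The main obstacle is establishing this isotropy lemma; the cleanest route, as in \cite{X94}, is to view the image $T(\Pi_n^2)\subset \RR^N$ as a finite-dimensional space on which multiplication by $x_1$ and $x_2$ is ``almost'' realized, with commutator defect given precisely by $A_{n-1,1}A_{n-1,2}^{\Tr}-A_{n-1,2}A_{n-1,1}^{\Tr}$, so that polynomials in $U$ correspond exactly to directions in which this defect must vanish.

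For the centrally symmetric statement \eqref{eq:Moller2}, I would choose $\PP_n$ to consist of orthogonal polynomials of pure parity $(-1)^n$, possible because $L^2(W)$ decomposes into even and odd parts when $W(-\xb)=W(\xb)$ and $\Omega=-\Omega$; this forces $B_{n,i}=0$. A direct computation of the commutator using the resulting parity-matched structure of $A_{n-1,i}$ yields $\rank\bigl[A_{n-1,1}A_{n-1,2}^{\Tr}-A_{n-1,2}A_{n-1,1}^{\Tr}\bigr]=2\lfloor n/2\rfloor$, and substitution into \eqref{eq:Moller-lwbd2} produces exactly \eqref{eq:Moller2}.
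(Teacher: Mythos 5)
The paper does not prove Theorem \ref{thm:Moller-lwbd} at all --- it is quoted from M\"oller \cite{Moller} in the matrix formulation of \cite{X94} --- so your proposal must be judged against the known arguments. Your skeleton is the right one and matches the literature: the crude bound $N \ge \dim\Pi_{n-1}^2 + (n+1) - \dim U$, where $U$ is the space of degree-$n$ orthogonal polynomials vanishing at every node, followed by an upper bound $\dim U \le (n+1) - \tfrac12\rank[\,\cdot\,]$, and finally the rank computation in the centrally symmetric case. But the two steps carrying all of the difficulty are asserted rather than derived, and one of them does not typecheck. Since $A_{n-1,i}$ is $n\times(n+1)$, the commutator $A_{n-1,1}A_{n-1,2}^\Tr - A_{n-1,2}A_{n-1,1}^\Tr$ is an $n\times n$ antisymmetric matrix, i.e.\ a skew form on $\RR^{n}$, whereas the coefficient vectors $\eb$ of $P=\eb^\Tr\PP_n\in U$ live in $\RR^{n+1}$; your isotropy bound ``$\dim \le (n+1)-r/2$'' is being applied to a form on the wrong space. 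More importantly, the mechanism you offer for the key lemma --- expanding $x_1(x_2P)=x_2(x_1P)$ and comparing $\PP_n$-components --- only reproduces the commutation relations of the structural matrices, which hold for \emph{every} orthogonal polynomial system independently of any cubature rule and therefore cannot constrain $U$. The actual argument (as in \cite{X94}) must use exactness of the rule on $\Pi_{2n-1}^2$ together with the vanishing of $x_iP$ at the nodes to show that the commutator factors as a sum of two matrices each of rank at most $\sigma = N-\dim\Pi_{n-1}^2$, whence $\rank \le 2\sigma$; identifying and proving that factorization is the heart of the theorem and is missing from your plan.

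Two further points. The claim that central symmetry gives $\rank\bigl[A_{n-1,1}A_{n-1,2}^\Tr - A_{n-1,2}A_{n-1,1}^\Tr\bigr] = 2\lfloor n/2\rfloor$ is precisely the nontrivial content of \eqref{eq:Moller2}: the upper bound $2\lfloor n/2\rfloor$ is automatic for an $n\times n$ antisymmetric matrix, and it is the \emph{lower} bound on the rank that must be proved, by an induction through the full family of commutation and rank conditions on the $A_{k,i}$ with $B_{k,i}=0$ (or by M\"oller's original parity argument on the polynomials themselves); calling it a ``direct computation'' hides the work. Finally, your crude bound's dimension count uses that $T(\Pi_{n-1}^2)$ and $T(\CV_n^2)$ intersect trivially, which follows from their orthogonality only when the cubature weights are positive; the general signed case, which \eqref{eq:Moller-lwbd2} is usually stated to cover, needs the more careful treatment in \cite{Moller}. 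Until these points are filled in, what you have is a correct outline of the standard proof rather than a proof.
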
 

The lower bound \eqref{eq:Moller-lwbd2} can also be extended to several variables. There is another lower bound
that extends \eqref{eq:Moller2} to $d > 2$ in the centrally symmetric setting in $\RR^d$, also due to M\"oller but based 
on a different approach. However, the most interesting case remains in two variables because 
we do not know any family of cubature rules that attain M\"oller's lower bound for $d > 2$, whereas a large family 
of examples is known for $d =2$ on the square. To describe the result, we state a characterization of the cubature 
rules that attain M\"oller's lower bound in \cite{Moller2} when $d =2$. 

\begin{thm}\label{thm:min_cubature}
Let $W$ and $\Omega$ be centrally symmetric. A cubature rule of degree $2 n-1$ attains M\"oller's lower 
bound \eqref{eq:Moller2} if, and only if, its nodes are common zeros of $n + 1 - \lfloor \frac n 2 \rfloor$ 
linearly independent orthogonal polynomials of degree $n$. 
\end{thm}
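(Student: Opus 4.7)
The plan is to introduce the space $\CU_n \subseteq \CV_n^2$ of orthogonal polynomials of degree $n$ vanishing at all cubature nodes, derive a universal dimension inequality tying $N$ to $r := \dim \CU_n$, and then combine it with \thmref{thm:Moller-lwbd} together with the centrally symmetric structure to close both directions.

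Fix a positive cubature rule $\sC(f) = \sum_{k=1}^N \l_k f(\xb_k)$ of degree $2n-1$ and let $E : \Pi_n^2 \to \RR^N$ be the evaluation map. My first step is to identify $\ker E = \CU_n$, which by rank-nullity gives
\[
   N \;\ge\; \dim \Pi_n^2 - r \;=\; \binom{n+1}{2} + (n+1-r).
\]
To see $\ker E \subseteq \CU_n$, decompose any $p \in \ker E$ as $p = q + P$ with $q \in \Pi_{n-1}^2$ and $P \in \CV_n^2$; exactness of $\sC$ on $qQ \in \Pi_{2n-1}^2$ for every $Q \in \CV_n^2$, together with the substitution $q(\xb_k) = -P(\xb_k)$, yields $\sum_k \l_k P(\xb_k) Q(\xb_k) = 0$ for every $Q \in \CV_n^2$. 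Specializing $Q = P$ and invoking positivity of the $\l_k$'s forces $P \in \CU_n$, after which $q$ vanishes at every node and exactness on $q^2 \in \Pi_{2n-2}^2$ forces $q = 0$.

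The $(\Rightarrow)$ direction is immediate from the inequality: if $N = \binom{n+1}{2} + \lfloor n/2 \rfloor$, then $r \ge n+1-\lfloor n/2 \rfloor$, so the nodes are common zeros of at least $n+1-\lfloor n/2\rfloor$ linearly independent orthogonal polynomials of degree $n$. For the $(\Leftarrow)$ direction, I would convert the presence of $s := n+1-\lfloor n/2\rfloor$ linearly independent vanishing orthogonal polynomials into a rank bound on the commutator $A_{n-1,1} A_{n-1,2}^\Tr - A_{n-1,2} A_{n-1,1}^\Tr$ that controls \eqref{eq:Moller-lwbd2}. Each $Q \in \CU_n$, expanded in the basis $\PP_n$ and pushed through the three-term relation evaluated at the nodes, furnishes a null vector of this commutator, so $r$ linearly independent $Q$'s cap its rank at $2(n+1-r) \le 2\lfloor n/2\rfloor$; combined with the lower bound $2\lfloor n/2\rfloor$ imposed by central symmetry (the mechanism behind \eqref{eq:Moller-lwbd2}), equality holds throughout, whence the bound in \eqref{eq:Moller-lwbd2} is tight and $N = \binom{n+1}{2}+\lfloor n/2\rfloor$.

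The hard part is this last algebraic step: turning the geometric statement "$Q$ vanishes on all nodes" into the algebraic statement "$Q$ lies in the kernel of the commutator of the structural matrices", and ensuring no extra nodes appear beyond the M\"oller count. The natural route is to encode the cubature as a joint spectral problem for the truncated three-term matrices, identify $\CU_n$ with a family of common eigenvectors of that problem, and exploit $B_{n-1,i}=0$ (central symmetry) to align the commutator defect with $\dim \CU_n$; this is the delicate content of \cite{Moller2}.
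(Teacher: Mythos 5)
The paper does not prove Theorem \ref{thm:min_cubature}; it quotes it from M\"oller \cite{Moller2}, so there is no in-paper argument to measure you against. Your forward direction is the standard one and is essentially correct: identify the kernel of the evaluation map $E:\Pi_n^2\to\RR^N$ with the space $\CU_n$ of degree-$n$ orthogonal polynomials vanishing at all nodes (via exactness on $\Pi_{2n-1}^2$ and positivity of the $\l_k$), then apply rank--nullity to get $N\ge \binom{n+1}{2}+(n+1-r)$, hence $r\ge n+1-\lfloor n/2\rfloor$ when $N$ equals M\"oller's bound. Note only that you have silently added the hypothesis that the weights are positive, which the statement does not contain; this is customary in this literature but should be said explicitly, since the step $\sum_k\l_k P(\xb_k)^2=0\Rightarrow P(\xb_k)=0$ is exactly where it is used.

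The converse, however, has a genuine gap, and the proposed mechanism cannot be repaired as described. The matrix $A_{n-1,1}A_{n-1,2}^\Tr-A_{n-1,2}A_{n-1,1}^\Tr$ is built from the three-term relation of the orthonormal basis and depends only on $W$, not on any cubature rule; in the centrally symmetric case its rank is already exactly $2\lfloor n/2\rfloor$ (this is precisely how \eqref{eq:Moller2} is deduced from \eqref{eq:Moller-lwbd2}). In particular it is an $n\times n$ antisymmetric matrix whose kernel has dimension $0$ or $1$, so it cannot carry $r\ge n+1-\lfloor n/2\rfloor\ge 2$ independent ``null vectors'' coming from $\CU_n$ (whose coefficient vectors live in $\RR^{n+1}$ in any case). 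More fundamentally, even granting that the rank equals $2\lfloor n/2\rfloor$, \eqref{eq:Moller-lwbd2} is still only a \emph{lower} bound on $N$; tightness of the bound as a formula for the weight function does not imply that the particular rule at hand attains it. What the converse actually requires is an upper bound: that a cubature rule of degree $2n-1$ whose nodes are common zeros of $n+1-\lfloor n/2\rfloor$ linearly independent elements of $\CV_n^2$ has at most $\binom{n+1}{2}+\lfloor n/2\rfloor$ nodes, i.e., a bound on the cardinality of the common zero set of such a subspace, equivalently control of the polynomial ideal it generates. That is the hard half of M\"oller's theorem --- it is where the ideal-theoretic machinery of \cite{Moller2}, or the nonlinear matrix equations of \cite{X94}, come in --- and your plan contains no argument for it.
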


A characterization of when there are $n + 1 - \lfloor \frac n 2 \rfloor$ linearly independent orthogonal 
polynomials of degree $n$ that have $N_{\mathrm{min}}(n)$ real nodes is given in \cite{X94} in terms of
a non-linear system of equations given via the structural matrices of orthogonal polynomials. 

Based on the characterization in Theorem \ref{thm:min_cubature}, the first family of cubature rules of degree 
$2n-1$ that attains the lower bound \eqref{eq:Moller-lwbd2} appears in \cite{MP}, soon after \cite{Moller}, for 
the product Chebyshev weight on $[-1,1]^2$. Further examples centered around the product Chebyshev weight 
functions have appeared in \cite{BP, CoolS, Sch, X94}, some based on different methods. However, the next group 
of examples was obtained only recently in \cite{X12a}, where a large collection of weight functions is initiated from 
the orthogonality of the Koornwinder polynomials and proved to admit minimal cubature rules. 

\subsection{A family of orthogonal polynomials on the square}
Our examples of minimal cubature rules are established for the weight functions defined below. We consider
the relevant orthogonal polynomials in this subsection. 

\begin{defn} \label{defn:Wgamma}
Let $w$ be a weight function defined on $[-1,1]$. For $\g > -1$, we define $W_\g$ on $[-1,1]^2$ by 
$$
  W_\g(\x) = w(\cos (\t_1-\t_2)) w(\cos(\t_1+\t_2)) \left(1-x_1^2\right)^\g \left(1-x_2^2\right)^\g \left|x_1^2-x_2^2\right|,
$$
where $\x = (x_1, x_2)= (\cos \t_1, \cos \t_2)$ with $0 \le \t_1,\t_2 \le \pi$. 
\end{defn}

These weight functions are closely related to the weight functions $\CW_\g$ defined on the curved domain $\Omega$ 
in \eqref{eq:Wcurved}. As it is shown in Theorem \ref{thm:GaussEx1d=2}, the weight functions $\CW_{\pm \f12}$ admit 
the Gaussian cubature rules of degree $2n-1$ on the domain $\Omega$. The following observation was made in
\cite{X12b}, which connects the two weigh functions. 

\begin{lem} \label{Int-Para-cube}
If $w$ is supported on $[-1,1]$, then 
\begin{equation} \label{eq:CWgamma}
     W_\g (\x) := 4^\g \CW_\g \left(2 x_1 x_2, x_1^2+x_2^2 -1\right)\left |x_1^2-x_2^2\right|, 
\end{equation}
so that $W_\g$ is centrally symmetric. Moreover, for integrable function $f$ on $\Omega$, 
\begin{align} \label{eq:Int-P-Q}
 \int_{[-1,1]^2} f(\ub) \CW_{\g} (\ub) \d \ub 
       = 4 \int_{\fs(w)^2} f\left(2x_1 x_2, x_1^2+x_2^2 -1\right) W_{\g}(\x) \d \x. 
\end{align}
\end{lem}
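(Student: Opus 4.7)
The plan is to treat the map $\varphi\colon \x=(x_1,x_2) \mapsto (2x_1x_2,\, x_1^2+x_2^2-1)$ as a branched change of variables from $[-1,1]^2$ onto $\Omega$ and verify three ingredients: a pointwise identity relating $\CW_\g\circ\varphi$ to $W_\g$, the Jacobian of $\varphi$, and the covering degree. Once the pointwise identity is in hand, the asserted formula for $W_\g$ and its central symmetry are immediate consequences, and the integral identity then assembles all three via the change-of-variables formula.

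\smallskip
For the pointwise identity, a direct expansion gives
\[
u_1^2-4u_2 \;=\; 4x_1^2x_2^2-4(x_1^2+x_2^2-1) \;=\; 4(1-x_1^2)(1-x_2^2),
\]
so the factor $(u_1^2-4u_2)^\g$ in the definition of $\CW_\g$ pulls back to $4^\g(1-x_1^2)^\g(1-x_2^2)^\g$. To identify the factor $w(y_1)w(y_2)$ hidden in $\CW_\g$, where $y_1+y_2=u_1$ and $y_1 y_2=u_2$ are the Vi\`ete data from \eqref{eq:x-u}, I would set $x_i = \cos\t_i$ and apply product-to-sum identities to obtain $y_1+y_2 = 2\cos\t_1\cos\t_2 = \cos(\t_1-\t_2)+\cos(\t_1+\t_2)$ and $y_1y_2 = \cos(\t_1-\t_2)\cos(\t_1+\t_2)$; hence $\{y_1,y_2\} = \{\cos(\t_1-\t_2),\cos(\t_1+\t_2)\}$. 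Multiplying through by $|x_1^2-x_2^2|$ reproduces Definition~\ref{defn:Wgamma} and yields the stated formula \eqref{eq:CWgamma}. Central symmetry of $W_\g$ then falls out since $\varphi(-\x)=\varphi(\x)$ and $|x_1^2-x_2^2|$ is even in $\x$.

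\smallskip
For the covering count, the identities $(x_1+x_2)^2=1+u_2+u_1$ and $(x_1-x_2)^2=1+u_2-u_1$ have nonnegative right-hand sides on $\Omega$ by \eqref{eq:curved}, so the four sign choices of the square roots produce exactly the four preimages $(x_1,x_2)$, $(x_2,x_1)$, $(-x_1,-x_2)$, $(-x_2,-x_1)$. Thus $\varphi:[-1,1]^2\to\Omega$ is surjective and generically four-to-one, with ramification on the measure-zero set $\{x_1=\pm x_2\}$. A short determinant computation gives $|J_\varphi(\x)| = 4|x_1^2-x_2^2|$.

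\smallskip
Finally, the change-of-variables formula applied to the four-fold cover gives
\[
\int_\Omega f(\ub)\CW_\g(\ub)\,\d\ub \;=\; \tfrac14 \int_{[-1,1]^2} f(\varphi(\x))\,\CW_\g(\varphi(\x))\,|J_\varphi(\x)|\,\d\x,
\]
and substituting the Jacobian together with the pointwise identity produces \eqref{eq:Int-P-Q}, with the domain reducing to $\fs(w)^2$ thanks to the support of $w$. The main bookkeeping task — and the likeliest source of a sign or constant slip — is verifying that the integrand $f(\varphi(\x))\CW_\g(\varphi(\x))$ is genuinely invariant under each of the three nontrivial preimage symmetries so that the four branches contribute equally; this holds because both $\varphi$ and $\CW_\g\circ\varphi$ depend only on the symmetric combinations $x_1^2+x_2^2$ and $x_1x_2$.
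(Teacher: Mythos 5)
Your argument is correct and is essentially the intended one (the paper states this lemma without proof, citing \cite{X12b}): the pullback identity $u_1^2-4u_2=4(1-x_1^2)(1-x_2^2)$, the product-to-sum identities giving $\{y_1,y_2\}=\{\cos(\t_1-\t_2),\cos(\t_1+\t_2)\}$, the Jacobian $4|x_1^2-x_2^2|$, and the generically four-to-one covering by $(\pm x_1,\pm x_2)$, $(\pm x_2,\pm x_1)$ are exactly the required ingredients. One remark: the ``constant slip'' you anticipated is real but lies in the statement rather than in your method --- your own computation shows that $\CW_\g(2x_1x_2,x_1^2+x_2^2-1)$ contributes a factor $4^{\g}(1-x_1^2)^\g(1-x_2^2)^\g$, so matching Definition~\ref{defn:Wgamma} forces the prefactor $4^{-\g}$ rather than $4^{\g}$ in \eqref{eq:CWgamma} (and the left-hand integral in \eqref{eq:Int-P-Q} should be over $\Omega$); carrying your change of variables through with these normalizations fixed yields the identity.
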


\begin{exam} \label{eq:Jac-2d}
If $w$ is the Jacobi weight function $w(t) = w_{\a,\b}(t) = (1-t)^\a(1+t)^\b$, $\a,\b > -1$, then, using the identity 
$(1\pm \cos (\t_1-\t_2))(1 \pm \cos (\t_1+\t_2)) = (x_1\pm x_2)^2$,  the weight function $W_\g$ becomes 
$W_{\a,\b,\g}$ defined by
\begin{equation*}
    W_{\a,\b,\g}(\x) =|x_1-x_2|^{2\a+1} |x_1+x_2|^{2\b+1} \left(1-x_1^2\right)^{\g} \left(1-x_2^2\right)^{\g} 
\end{equation*}
for $\x \in [-1,1]^2$, where $\a,\b > -1$ and $\g \ge -\f12$. Correspondingly the weight $\CW_\g$ 
becomes the weight function $\CW_{\a,\b,\g}$ in \eqref{eq:CWgamma}.
\end{exam}

The relation \eqref{eq:Int-P-Q} allows us to derive orthogonal polynomials for $W_\g$ through the Koornwinder polynomials. 
Using the integral relation \eqref{eq:Int-P-Q}, the orthogonal polynomials associated with $W_\g$ on $[-1,1]^2$ can be 
deduced in terms of those associated with $\CW_\g$ and three other related weight functions on the domain $\Omega$
\cite{X12b}. While the weight function $\CW_\g$ in \eqref{eq:Wcurved} is defined via $w$, the other weight functions are 
defined via $(1-x^2)w(x)$, $(1-x) w(x)$, and $(1+x)w(x)$, respectively. More precisely, they are defined by 
\begin{align}\label{Wij}
\begin{split}
  \CW_\g^{(1,1)}(\u) &  :=   (1-u_1+u_2)(1+u_1+u_2)\CW_\g(\u), \\
  \CW_\g^{(1,0)}(\u)  & :=   (1-u_1+u_2)\CW_\g(\u), \\
  \CW_\g^{(0,1)}(\u)  & :=   (1+u_1+u_2)\CW_\g(\u), 
\end{split} \qquad \u \in \Omega,
\end{align} 
since $1-u_1+u_2 = (1-x_1)(1-x_2)$ and $1+u_1+u_2 = (1+x_1)(1+x_2)$ under \eqref{eq:x-u}. 
Let $\{P_{k,n}^{(\g)}: 0 \le k \le n\}$ be an orthogonal basis for $\CV_n(\CW_{\g})$ under the inner product
$$
 \la f, g \ra_{\CW_{\g}} = \int_\Omega f(\u) g(\u) \CW_\g (\u) \d \u.
$$%
We further denote by $P_{k,n}^{(\g),1,1}$, $P_{k,n}^{(\g),1,0,}$, $P_{k,n}^{(\g),0,1}$ the 
orthogonal polynomials of degree $n$ with respect to $\la f, g \ra_\CW$ for $\CW = \CW_\g^{(1,1)}$, 
$\CW_\g^{(1,0)}$, $\CW_\g^{(0,1)}$, respectively. 

\begin{prop}\label{prop:iQkn}
An orthogonal basis of $\CV_{2n}(W_{\g})$ is given by 
$$
 \left \{ {}_1Q_{k,2n}^{(\g)}: 0 \le k \le n \right \}\bigcup \left \{ {}_2Q_{k,2n}^{(\g)}: 0 \le k \le n-1 \right \}
$$%
where 
\begin{align} \label{Qeven2}
\begin{split}
         {}_1Q_{k,2n}^{(\g)}(\x):= & P_{k,n}^{(\g)}(2x_1x_2, x_1^2+x_2^2 -1), \qquad 0 \le k \le n, \\
         {}_2Q_{k,2n}^{(\g)}(\x) := & (x_1^2-x_2^2)  P_{k,n-1}^{(\g),1,1}(2x_1x_2, x_1^2+x_2^2 -1),  \quad 0 \le k \le n-1; 
 \end{split}
\end{align}%
and an orthogonal basis of $\CV_{2n+1}(W_{\g})$ is given by 
$$
 \left \{ {}_1Q_{k,2n+1}^{(\g)}: 0 \le k \le n \right \}\bigcup \left \{ {}_2Q_{k,2n+1}^{(\g)}: 0 \le k \le n\right \}
$$%
where 
\begin{align} \label{Qodd2}
 \begin{split}
     &  {}_1Q_{k,2n+1}^{(\g)}(\x):= (x_1+ x_2) P_{k,n}^{(\g),0,1}(2x_1x_2, x_1^2+x_2^2 -1), \qquad  0 \le k \le n,  \\
     &  {}_2Q_{k,2n+1}^{(\g)}(\x):= (x_1-x_2) P_{k,n}^{(\g),1,0}(2x_1x_2, x_1^2+x_2^2 -1), \qquad 0 \le k \le n, 
 \end{split}
\end{align}
\end{prop}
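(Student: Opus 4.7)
The strategy is to exploit the fact that $W_\g$ is invariant under both the central reflection $\sigma\colon(x_1,x_2)\mapsto(-x_1,-x_2)$ and the coordinate swap $\tau\colon(x_1,x_2)\mapsto(x_2,x_1)$. Under the Klein four-group $\langle\sigma,\tau\rangle$, every polynomial in $\x$ decomposes uniquely into four isotypic components indexed by characters in $\{+,-\}^2$, and a direct factorization argument shows these components are precisely
$$
  F(u_1,u_2),\quad (x_1^2-x_2^2)F(u_1,u_2),\quad (x_1+x_2)F(u_1,u_2),\quad (x_1-x_2)F(u_1,u_2),
$$
as $F$ ranges over polynomials, where $u_1=2x_1x_2$ and $u_2=x_1^2+x_2^2-1$. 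Since $W_\g$ is itself of type $(+,+)$, polynomials of distinct isotypic types are automatically $W_\g$-orthogonal. Inspection of \eqref{Qeven2}--\eqref{Qodd2} then shows that the four families ${}_1Q_{k,2n}^{(\g)}$, ${}_2Q_{k,2n}^{(\g)}$, ${}_1Q_{k,2n+1}^{(\g)}$, ${}_2Q_{k,2n+1}^{(\g)}$ sit respectively in the $(+,+)$, $(+,-)$, $(-,+)$, $(-,-)$ components. Hence the orthogonality of each ${}_iQ_{k,m}^{(\g)}$ against all polynomials of degree $<m$ reduces to its orthogonality against the polynomials of degree $<m$ in its own isotypic class.

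Within each class, I would then pass to $\Omega$ using Lemma~\ref{Int-Para-cube}. For the invariant class, a test polynomial of degree $<2n$ has the form $G(u_1,u_2)$ with $\deg_{\ub}G\le n-1$, so
$$
  \la{}_1Q_{k,2n}^{(\g)},G(u_1,u_2)\ra_{W_\g}=\tfrac{1}{4}\la P_{k,n}^{(\g)},G\ra_{\CW_\g}=0
$$
by the defining orthogonality on $\Omega$. For the other three classes, pairing two polynomials of the same isotypic type produces a squared prefactor $h(\x)^2$ with $h\in\{x_1-x_2,\,x_1+x_2,\,x_1^2-x_2^2\}$. Using the identities
$$
  (x_1-x_2)^2=1-u_1+u_2,\quad (x_1+x_2)^2=1+u_1+u_2,\quad (x_1^2-x_2^2)^2=(1-u_1+u_2)(1+u_1+u_2),
$$
this prefactor absorbs into $\CW_\g$ to yield $\CW_\g^{(1,0)}$, $\CW_\g^{(0,1)}$, and $\CW_\g^{(1,1)}$ exactly as defined in \eqref{Wij}. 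A second application of Lemma~\ref{Int-Para-cube} converts the $W_\g$-inner product into the matching inner product on $\Omega$, whereupon orthogonality follows from the defining orthogonality of $P_{k,n}^{(\g),1,0}$, $P_{k,n}^{(\g),0,1}$, and $P_{k,n-1}^{(\g),1,1}$ against lower-degree polynomials with respect to those weights.

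A dimension count finishes the argument: the even-degree list contains $(n+1)+n=2n+1=\dim\CV_{2n}^2$ polynomials and the odd-degree list $(n+1)+(n+1)=2n+2=\dim\CV_{2n+1}^2$. Linear independence within each isotypic block follows from that of the corresponding Koornwinder-type basis on $\Omega$, and across blocks from the uniqueness of the isotypic decomposition. The step I expect to require the most care is the degree bookkeeping: verifying that the index shift $n\mapsto n-1$ in ${}_2Q_{k,2n}^{(\g)}$ is forced precisely because the prefactor $x_1^2-x_2^2$ already contributes degree $2$, whereas the odd-degree families carry only degree-$1$ prefactors and so can be paired with $P^{(\g),\cdot,\cdot}$'s of full degree $n$ on $\Omega$. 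The isotypic reduction itself is elementary but is the conceptual pivot on which the entire proof turns.
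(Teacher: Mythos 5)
Your argument is correct and is essentially the route the paper intends: it deduces Proposition \ref{prop:iQkn} from the integral relation \eqref{eq:Int-P-Q} together with the identities $1\pm u_1+u_2=(x_1\pm x_2)^2$, which is exactly how the squared prefactors turn $\CW_\g$ into the auxiliary weights $\CW_\g^{(1,1)},\CW_\g^{(1,0)},\CW_\g^{(0,1)}$ of \eqref{Wij} (the paper itself only cites \cite{X12b} for this). Your explicit isotypic decomposition under $\langle\sigma,\tau\rangle$ and the dimension count make the sketch complete.
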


For $\g = \pm \f12$, an explicit basis of $\CV_n(\CW_{\pm \f12})$ can be given in terms of orthogonal polynomials
$p_n(w)$ of one variable, as shown in Proposition \ref{prop:OPbiangle}. Thus, by the relation in Proposition \ref{prop:iQkn}, 
we can state a basis for $W_{\pm \f12}$ in terms of orthogonal polynomials of one variable. 
Let 
$$
w^{i,j}(t) = (1-t)^i (1+t)^j w(t), \qquad (i,j) =(1,1), (1,0), (0,1).
$$

\begin{prop}\label{prop:iQn_st}
Let $w$ be a weight function with support $\s(w) = [-1,1]$. Let $\x = (\cos \t_1, \cos \t_2)$. Then
an orthogonal basis of $\CV_{2n}(W_{-\frac12})$ is given by, for $0 \le k \le n$ 
and $0 \le k \le n-1$, respectively,  
\begin{align*} 
    & {}_1Q_{k,2n}^{(-\frac12)}(\x) = p_n(w;\cos (\t_1 - \t_2)) p_k(w;\cos (\t_1+\t_2)) \\
    &    \qquad\qquad  \qquad\qquad \qquad\qquad  
           +  p_k (w;\cos (\t_1 - \t_2)) p_n (w;\cos (\t_1+\t_2)), \\
    & {}_2Q_{k,2n}^{(-\frac12)}(\x)   = 
       (x_1^2-x_2^2) \left[ p_{n-1} \left(w^{1,1}; \cos (\t_1 - \t_2)\right) p_k \left(w^{1,1} ; \cos (\t_1 + \t_2)\right) \right. \\ 
    &    \qquad\qquad  \qquad\qquad \qquad\qquad   
        \left. +\,  p_k \left(w^{1,1}; \cos (\t_1 - \t_2)\right) p_{n-1}\left(w^{1,1}; \cos (\t_1 + \t_2)\right) \right], 
\end{align*}
and an orthonormal basis of $\CV_{2n+1}(W_{-\frac12})$ is given by, for $0 \le k \le n$, 
 \begin{align*} 
     & {}_1Q_{k,2n+1}^{(-\frac12)}(\x)   = (x_1+x_2) 
           \left[ p_n \left(w^{0,1}; \cos (\t_1 - \t_2)\right) p_k \left(w^{0,1}; \cos (\t_1 +\t_2)\right) \right. \\ 
     &    \qquad\qquad   \qquad\qquad \qquad\qquad   \left. 
           +\, p_k \left(w^{0,1};\cos (\t_1 - \t_2)\right) p_n \left(w^{0,1};\cos (\t_1 + \t_2)\right) \right], \\
     & {}_2Q_{k,2n+1}^{(-\frac12)}(\x)  =  (x_1- x_2) \left[
          p_n \left(w^{1,0}; \cos (\t_1 - \t_2)\right ) p_k \left(w^{1,0};\cos (\t_1 + \t_2)\right)  \right. \\ 
     &    \qquad\qquad  \qquad\qquad   \qquad\qquad 
          \left.    +\,  p_k \left(w^{1,0};\cos (\t_1 - \t_2)\right) p_n \left(w^{1,0};\cos (\t_1 + \t_2)\right) \right].
\end{align*}
\end{prop}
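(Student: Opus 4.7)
The plan is to chain together Proposition \ref{prop:iQkn}, which expresses bases of $\CV_N(W_\g)$ through four weights on the curved domain $\Omega$, with the explicit formulas of Proposition \ref{prop:OPbiangle}, specialized to $\g = -\f12$, and then to carry out the substitution $(u_1,u_2) = (2x_1x_2, x_1^2+x_2^2-1)$ in trigonometric variables.

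First I would verify that each of the four weights $\CW_{-\f12}$, $\CW_{-\f12}^{(1,1)}$, $\CW_{-\f12}^{(1,0)}$, $\CW_{-\f12}^{(0,1)}$ is itself of the Koornwinder form \eqref{eq:Wcurved} with $\g = -\f12$, but built from a different one-variable weight. From the identities $1 \mp u_1 + u_2 = (1 \mp y_1)(1 \mp y_2)$ under the map $u_1 = y_1+y_2$, $u_2 = y_1 y_2$, the definitions \eqref{Wij} give
$$
   \CW_{-\f12}^{(i,j)}(u_1,u_2) = w^{i,j}(y_1)\,w^{i,j}(y_2)\,|y_1-y_2|^{-1}, \qquad (i,j) \in \{(0,0),(1,1),(1,0),(0,1)\},
$$
where $w^{0,0} := w$. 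Proposition \ref{prop:OPbiangle} therefore applies to each of the four weights and yields, for $0 \le k \le n-1$,
$$
   P_{k,n}^{(-\f12),i,j}(u_1,u_2) = p_n(w^{i,j};y_1)\,p_k(w^{i,j};y_2) + p_n(w^{i,j};y_2)\,p_k(w^{i,j};y_1),
$$
with the boundary case $k=n$ given by $\sqrt{2}\, p_n(w^{i,j};y_1)\, p_n(w^{i,j};y_2)$.

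Next I would specialize the change of variables. Using $2\cos\t_1\cos\t_2 = \cos(\t_1-\t_2)+\cos(\t_1+\t_2)$ and $\cos^2\t_1+\cos^2\t_2-1 = \cos(\t_1-\t_2)\cos(\t_1+\t_2)$, the choice $y_1 = \cos(\t_1-\t_2)$, $y_2 = \cos(\t_1+\t_2)$ realizes exactly the substitution $(u_1,u_2) = (2x_1x_2,x_1^2+x_2^2-1)$ used in \eqref{Qeven2}--\eqref{Qodd2}. Feeding these $y_1,y_2$ into the formulas above and then into \eqref{Qeven2}--\eqref{Qodd2} produces each of the four claimed expressions, with the prefactors $(x_1^2-x_2^2)$, $(x_1+x_2)$, $(x_1-x_2)$ being inherited verbatim from Proposition \ref{prop:iQkn}. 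There is no genuinely hard step; the substantive work has already been absorbed by Propositions \ref{prop:OPbiangle} and \ref{prop:iQkn}, and what remains is the identification of the four auxiliary weights and the two trigonometric identities. The only mild subtlety is the endpoint $k=n$ in the even case, where the symmetric sum replaces $\sqrt{2}\,p_n(y_1)p_n(y_2)$ up to a nonzero scalar; this is harmless since orthogonality, not orthonormality, is what is asserted for the even-degree basis.
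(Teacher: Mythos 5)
Your argument is correct and is precisely the derivation the paper intends: the paper gives no separate proof, only the remark that the result follows by combining Propositions \ref{prop:OPbiangle} and \ref{prop:iQkn}, and you have supplied exactly the missing details (identifying each $\CW_{-\f12}^{(i,j)}$ as the Koornwinder weight built from $w^{i,j}$, plus the two product-to-sum identities realizing $(u_1,u_2)=(2x_1x_2,x_1^2+x_2^2-1)$). The only addendum is that the scalar discrepancy at $k=n$ which you flag in the even case also occurs at $k=n$ in the odd-degree formulas, where the paper's ``orthonormal'' should likewise be read as ``orthogonal''; it is harmless for the same reason you give.
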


\subsection{Minimal cubature rules of degree $2n-1$}

If $n = 2m$ is even, then the nodes of the cubature rule will be the common zeros of polynomials ${}_1Q_{k,2m}^{(\pm\f12)}$, 
$0\le k \le m$, in $\CV_n(W_{\pm \f12})$. To describe these zeros, let us write the zeros $t_{k,n}$ of the orthogonal 
polynomial $p_n(w)$ as $t_{k,m} =  \cos \t_{k,m}$ with $\t_{k,m} \in (0, \pi)$. Moreover, for $1\le j, k \le m$, we 
denote 
\begin{align} \label{stjk} 
  s_{j,k}^{(m)} =  \cos \tfrac{\t_{j,m}-\t_{k,m}}{2}  &\quad\hbox{and}\quad  t_{j,k}^{(m)} =  \cos \tfrac{\t_{j,m}+\t_{k,m}}{2}.
\end{align}
Furthermore, we denote by $\l_{k,m}(w)$ the weights in the Gauss quadrature rule of degree $2m-1$ for the weight $w$, 
$$
   \int_{-1}^1 f(t) w(t) \d t  = \sum_{k=1}^m  \l_{k,m}(w) f(t_{k,m}), \qquad \deg f \le 2 m-1. 
$$

\begin{thm} \label{thm:cubaCW}
Let $n = 2m$ be an even positive integer.  Then the weight function $W_{\pm \frac12}$, defined in Definition \ref{defn:Wgamma},
admit the minimal cubature 
rule of degree $2n-1 = 4m-1$ with $2m(m+1)$ nodes. More precisely,   
\begin{enumerate}[\rm (i)]
\item for $W_{-\f12}$ and $f\in \Pi_{2n-1}^2$, 
\begin{align} \label{MinimalCuba2-}
  \int_{[-1,1]^2}  f(\x) W_{-\frac12}(\x) \d \x =  \frac12 \sum_{k=1}^m & \mathop{ {\sum}' }_{j=1}^k 
    \l_{j,k}^{(m)} \left[ f\left( s_{j,k}^{(m)}, t_{j,k}^{(m)}\right)+  f\left( t_{j,k}^{(m)}, s_{j,k}^{(m)}\right)    \right .  \notag \\  
        & + \left.  f\left ( - s_{j,k}^{(m)}, - t_{j,k}^{(m)}\right)+  f\left(- t_{j,k}^{(m)},- s_{j,k}^{(m)}\right)  \right],
\end{align}
where $\l_{j,k}^{(m)} = \l_{j,m}(w) \l_{k,m}(w)$ and $\sum'$ means that the terms for $j = k$ are divided by 2.
\item For $W_{\frac12}$ and $f\in \Pi_{2n-1}^2$,  
\begin{align} \label{MinimalCuba2+}
  \int_{[-1,1]^2} f(\x) W_{\frac12}(\x) \d \x  =  \frac12 \sum_{k=2}^{m+1} \sum_{j=1}^{k-1} &
   \mu_{j,k}^{(m+1)} \left[ f\left(s_{j,k}^{(m+1)}, t_{j,k}^{(m+1)}\right)+f\left( t_{j,k}^{(m+1)}, s_{j,k}^{(m+1)}\right)
        \right.\notag  \\  
  & + \left.  f\left(-s_{j,k}^{(m+1)}, -t_{j,k}^{(m+1)}\right)+f\left(-t_{j,k}^{(m+1)},-s_{j,k}^{(m+1)}\right)\right], 
  \end{align} 
where $\mu_{j,k}^{(m+1)} = \l_{j,m}(w) \l_{k,m}(w) (\cos \t_{j,m+1} - \cos \t_{k,m+1})^2$. 
\end{enumerate}
\end{thm}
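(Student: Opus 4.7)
The strategy is to transport the Gauss cubature rule for $\CW_{\pm\f12}$ on the curved domain $\Omega$ (\thmref{thm:GaussEx1d=2}) back to $[-1,1]^2$ via the quadratic map $\Phi(\x)=(2x_1x_2,\,x_1^2+x_2^2-1)$ appearing in \lemref{Int-Para-cube}. The key structural fact is that $W_{\pm\f12}$ is invariant under the Klein four-group $G$ generated by the swap $(x_1,x_2)\mapsto(x_2,x_1)$ and the central reflection $\x\mapsto-\x$, and that $\Phi$ realizes the quotient $[-1,1]^2/G\to\Omega$: its generic fibers are $G$-orbits of cardinality four. This group structure is exactly what produces the four-term orbit sum $f(s,t)+f(t,s)+f(-s,-t)+f(-t,-s)$ in \eqref{MinimalCuba2-} and \eqref{MinimalCuba2+}.

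For $f\in\Pi_{4m-1}^2$, I first form the $G$-symmetrization $\tilde f:=\tfrac14\sum_{g\in G}f\circ g$. Since $W_{\pm\f12}$ is $G$-invariant, $\int f\,W_{\pm\f12}\d\x=\int\tilde f\,W_{\pm\f12}\d\x$. The invariant ring $\RR[x_1,x_2]^G$ is generated by $x_1x_2$ and $x_1^2+x_2^2$, so there is a unique polynomial $F$ with $\tilde f=F\circ\Phi$; since every $G$-invariant polynomial has even total degree, $\deg\tilde f\le 4m-2$ and hence $\deg F\le 2m-1$. Applying \lemref{Int-Para-cube} (equivalently, changing variables $\u=\Phi(\x)$ using that $\Phi$ is $4$-to-$1$ with Jacobian $4|x_2^2-x_1^2|$ and the identity \eqref{eq:CWgamma}), the integral $\int_{[-1,1]^2}F(\Phi(\x))W_{\pm\f12}(\x)\d\x$ equals an explicit $\g$-dependent multiple of $\int_\Omega F(\u)\CW_{\pm\f12}(\u)\d\u$.

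Since $\deg F\le 2m-1$, \thmref{thm:GaussEx1d=2} (with $n$ replaced by $m$) evaluates this $\Omega$-integral exactly as a weighted sum of $F$ over the zeros of $\PP_m^{\pm\f12}$ from Proposition~\ref{prop:GaussZero}. For such an $\Omega$-node $(u_{j,k},v_{j,k})=(\cos\t_j+\cos\t_k,\cos\t_j\cos\t_k)$, the product-to-sum identities
\begin{align*}
    2\cos\tfrac{\t_j-\t_k}{2}\cos\tfrac{\t_j+\t_k}{2} &= \cos\t_j+\cos\t_k,\\
    \cos^2\tfrac{\t_j-\t_k}{2}+\cos^2\tfrac{\t_j+\t_k}{2}-1 &= \cos\t_j\cos\t_k
\end{align*}
show that $\Phi(s_{j,k},t_{j,k})=(u_{j,k},v_{j,k})$, so the fiber equals the $G$-orbit of $(s_{j,k},t_{j,k})$ and $G$-invariance of $\tilde f$ yields $F(u_{j,k},v_{j,k})=\tfrac14[f(s,t)+f(t,s)+f(-s,-t)+f(-t,-s)]$. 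Substituting and collecting constants gives \eqref{MinimalCuba2-}; the parallel argument applied to \eqref{GaussCuba+} (with Gauss nodes from $p_{m+1}(w)$) gives \eqref{MinimalCuba2+}. Minimality follows since the node count is $4\cdot\binom{m+1}{2}=2m(m+1)$, matching M\"oller's bound \eqref{eq:Moller2} for $n=2m$, namely $\binom{2m+1}{2}+m=2m(m+1)$. Alternatively, \thmref{thm:min_cubature} is verified directly by the $m+1$ polynomials $\{{}_1Q_{k,2m}^{(\pm\f12)}:0\le k\le m\}$ from Proposition~\ref{prop:iQkn}, which are pullbacks of $P_{k,m}^{(\pm\f12)}$ by $\Phi$ and therefore vanish at every lifted node.

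The main obstacle is the careful bookkeeping of multiplicative constants: the Jacobian of $\Phi$, the $4^{\pm\f12}$ factor coupling $W_\g$ and $\CW_\g$ via \eqref{eq:CWgamma}, the $\tfrac14$ from $G$-symmetrization, and the halving convention $\sum'$ (inherited from \eqref{GaussCuba-} and reflecting orbit degeneration on the parabolic boundary $u_1^2=4u_2$ where $j=k$ corresponds to $s_{j,j}^{(m)}=1$) must conspire to produce exactly the $\tfrac12$ prefactor in \eqref{MinimalCuba2-} and \eqref{MinimalCuba2+}. A secondary check is that the $2m(m+1)$ nodes are genuinely distinct: for $j<k$ this is immediate from the separation of the $\t_{i,m}$'s, while for $j=k$ the orbit points $(1,\cos\t_{k,m})$, $(\cos\t_{k,m},1)$, $(-1,-\cos\t_{k,m})$, $(-\cos\t_{k,m},-1)$ remain distinct on the boundary of $[-1,1]^2$, so the $\sum'$ halving affects only the cubature weight, not the node count.
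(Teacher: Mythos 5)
Your proposal is correct and follows essentially the route the paper itself indicates: Theorem \ref{thm:cubaCW} is quoted from \cite{X12a}, where it is obtained precisely by pulling the Gauss cubature rules of Theorem \ref{thm:GaussEx1d=2} back to $[-1,1]^2$ through $(x_1,x_2)\mapsto(2x_1x_2,\,x_1^2+x_2^2-1)$, the four-point orbits of the group generated by the swap and $\x\mapsto-\x$ accounting for the four-term sums, so your argument is a faithful reconstruction. One caution on the point you yourself flag as the remaining obstacle: if you chain the printed constants verbatim --- the factor $4$ in \eqref{eq:Int-P-Q}, the factor $2$ in \eqref{GaussCuba-}, and your $\tfrac14$ from symmetrization --- you land on a prefactor $\tfrac18$ rather than $\tfrac12$, because the intermediate displays carry normalization slips (for instance \eqref{GaussCuba-} and \eqref{eq:SymmIntd=2} already disagree by a factor of $2$ when tested on $f\equiv 1$); the prefactor is therefore best pinned down independently, e.g.\ by evaluating both sides of \eqref{MinimalCuba2-} at $f\equiv 1$, where the right-hand side gives $\tfrac12\cdot 4\cdot\tfrac12\bigl(\sum_k\l_{k,m}(w)\bigr)^2=\bigl(\int_{-1}^1 w\bigr)^2$, matching $\int_{[-1,1]^2}W_{-1/2}\,\d\x$ and confirming the stated $\tfrac12$.
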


This is established in \cite{X12a}, where it is shown that these minimal cubature rules can be derived more 
intuitively from several mappings from the Gauss cubature rules in Theorem \ref{thm:GaussEx1d=2}. 

Let $w = w_{\a,\b}$, the the minimal cubature rules of the degree $2n-1 = 4m-1$ are for the weight functions 
\begin{equation} \label{eq:Wab-1/2}
W_{\a,\b,-\f12}(\x) = \frac{|x_1-x_2|^{2\a+1} |x_1+x_2|^{2\b+1}}{\sqrt{1-x_1^2}\sqrt{1-x_2^2}}, \quad \a,\b >  -1,
\end{equation}
when $\g =  \-\f12$. Their $2 m(m+2)$ nodes are given in \eqref{MinimalCuba2-} with $t_{j,n}(w) = t_{j,n}^{(\a,\b)}$
being the nodes of the Jacobi polynomial $P_n^{(\a,\b)}$. When $\a = \b = -\f12$, these are minimal cubature rules 
for the Chebyshev weight functions that first appeared in \cite{MP}. For $\a > -\f12$ and/or $\b > - \f12$, the nodes are 
propelled away from the diagonal(s) of the square $[-1,1]^2$, as shown in Figure \ref{Fig:min47-ab2} for the weigh 
functions 
$$
  W_{\f12,\f12,-\f12}(\x) = \frac{|x_1^2 - x_2^2|}{\sqrt{1-x_1^2} \sqrt{1-x_2^2}} \quad \hbox{and}\quad
  W_{\f12, - \f12,-\f12}(\x) = \frac{|x_1 - x_2|}{\sqrt{1-x_1^2} \sqrt{1-x_2^2}}. 
$$%
\begin{figure}[ht]
\centering
\includegraphics[width=0.45\textwidth]{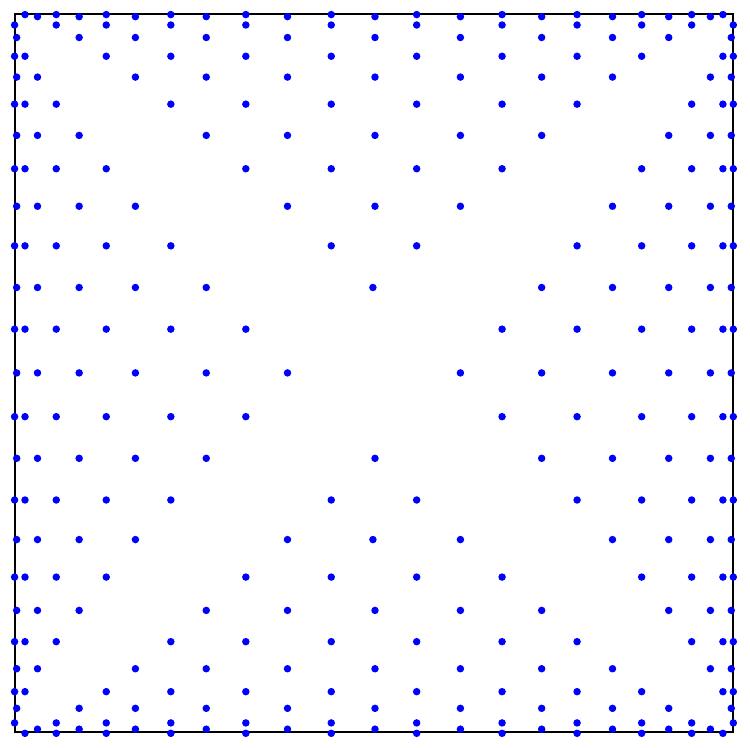}   \quad
\includegraphics[width=0.45\textwidth]{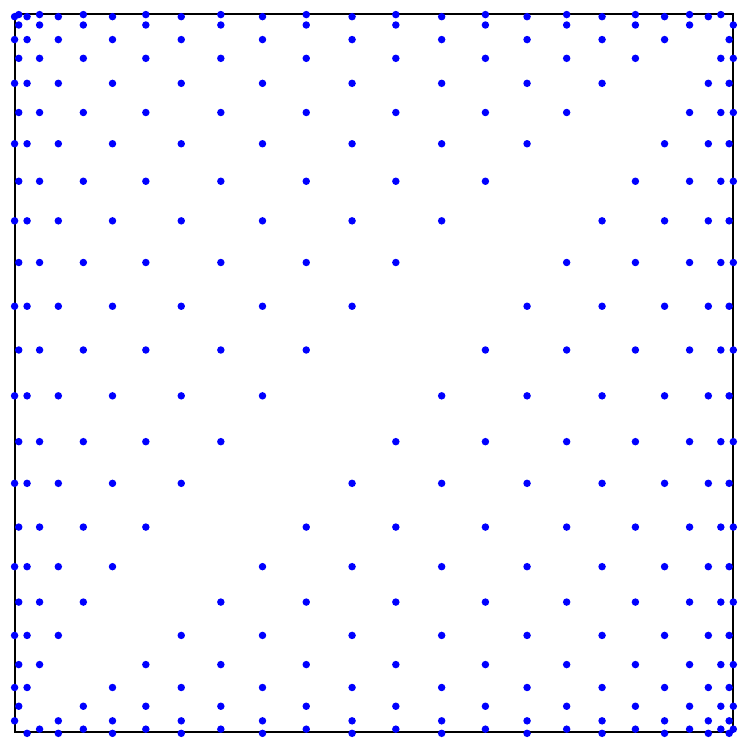} 
\caption{312 nodes of minimal cubature rule of degree $47$ for $W_{\f12, \f12,-\f12}$ (left) and
$W_{\f12, - \f12,-\f12}$ (right)}  \label{Fig:min47-ab2}
\end{figure}

In the case of $n$ is odd,  $n = 2m+1$, the cubature rule of degree $2n-1 =  4 m +1$ that attains the lower bound
\eqref{eq:Moller2} has $N =2(m + 1)^2-1$. By Theorem \ref{thm:min_cubature}, the nodes are zeros of $m+2$ orthogonal 
polynomials of degree $n$. We again expect that the off-diagonal nodes of the minimal cubature rule are common zeros of 
$\{{}_2Q_{k,2m+1}^{\a,\b,\pm \f12}: 0 \le k \le m\}$, but this collection of polynomials consists of $m+1$ polynomials and 
it has an infinite many zeros since they vanish on the $x_2=x_1$ diagonal. Thus, we need one more orthogonal polynomial 
of degree $n= 2m+1$ that vanishes on the off-diagonal nodes and also on $2m$ nodes on the diagonal. For this weight 
function, we need to know the zeros of a particular quasi-orthogonal polynomials, which we shall state only when
$w$ is the Jacobi weight function. 

Let $t_{k,m} = t_{k,m}^{\a+1,\b}$ be the zeros of $P_m^{(\a+1,\b)}$. We adopt the notation in \eqref{stjk}. 
Furthermore, we define 
\begin{align*}
     S_m^{(\pm \f12)} (t) & = P_m^{(\a,\b+1)}(1)P_m^{(\a+1,\b)}(2t^2-1) \mp P_m^{(\a,\b+1)}(2t^2-1)P_m^{(\a+1,\b)}(1).
\end{align*}
It is shown in \cite{X17} that this quasi-orthogonal polynomial has $2m$ distinct zeros in $(-1,1)$. 

\begin{thm} \label{thm:cuba_minO}
Let $n = 2 m+1$. Then the minimal cubature rules of degree $2n-1$ exist for $W_{\a,\b,\pm\f12}$. 
More precisely, 
\begin{enumerate} [\rm (i)]
\item Let $\xi_{k,2m+1} \in (-1,1)$, $1 \le k \le 2m+1$, be the zeros of $t S_m^{(-\f12)}(2t^2-1)$. Then a
minimal cubature rule of degree $2n-1$ for $W_{\a,\b, -\f12}$ has the nodes 
\begin{align*}
 X_{n}^{(-\f12)} =& \left \{ \left(s_{j,k}^{(m)}, t_{j,k}^{(m)}\right), \left( t_{j,k}^{(m)}, s_{j,k}^{(m)}\right),  
        \left( - s_{j,k}^{(m)}, - t_{j,k}^{(m)}\right), \left(- t_{j,k}^{(m)},- s_{j,k}^{(m)}\right):  \right. \\
        & \left.  \quad 1 \le j\le k \le m\right\}  \bigcup \left\{(\xi_{k,2m+1}, \xi_{k,2m+1}): 1 \le k \le 2m+1 \right\}.
\end{align*}
\item Let $\eta_{k,2m+1} \in (-1,1)$, $1 \le k \le 2m+1$, be the zeros of $S_{m+1}^{(\f12)}(2t^2-1)$ inside
$(-1,1)$. A minimal cubature rule of degree $2n-1$ for $W_{\a,\b,\f12}$ has the nodes 
\begin{align*}
 X_{n}^{(\f12)} =& \left \{ \left(s_{j,k}^{(m+1)}, t_{j,k}^{(m+1)}\right), \left( t_{j,k}^{(m+1)}, s_{j,k}^{(m+1)}\right),  
        \left( - s_{j,k}^{(m+1)}, - t_{j,k}^{(m+1)}\right), \left(- t_{j,k}^{(m+1)},- s_{j,k}^{(m+1)}\right):  \right. \\
        & \left.  \quad 0 \le j <  k\le m+1\right\}  \bigcup \left\{(\eta_{k,2m+1}, -\eta_{k,2m+1}): 1 \le k \le 2m+1 \right\}.
\end{align*}%
\end{enumerate}
\end{thm}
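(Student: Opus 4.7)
The plan is to apply Theorem~\ref{thm:min_cubature}: since $W_{\a,\b,\pm\f12}$ is centrally symmetric (Example~\ref{eq:Jac-2d}) and $n+1-\lfloor n/2\rfloor=m+2$ for $n=2m+1$, it suffices to exhibit $m+2$ linearly independent polynomials in $\CV_{2m+1}(W_{\a,\b,\pm\f12})$ that vanish on the proposed node set of cardinality $2(m+1)^2-1$. The basis of $\CV_{2m+1}$ from Proposition~\ref{prop:iQkn} splits naturally into two halves of size $m+1$: the ``symmetric'' polynomials ${}_1Q_{k,2m+1}^{(\g)}$ carrying a factor $(x_1+x_2)$, and the ``antisymmetric'' polynomials ${}_2Q_{k,2m+1}^{(\g)}$ carrying a factor $(x_1-x_2)$.

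For case (i), I would take $m+1$ of the required polynomials to be the antisymmetric basis $\{{}_2Q_{k,2m+1}^{(-\f12)}:0\le k\le m\}$. They vanish identically on the diagonal $x_1=x_2$, hence on the $2m+1$ diagonal nodes. For an off-diagonal node $(s_{j,k}^{(m)},t_{j,k}^{(m)})$, the explicit form in Proposition~\ref{prop:iQn_st} shows that writing $x_i=\cos\t_i$ forces $\cos(\t_1\pm\t_2)$ to be zeros of $P_m^{(\a+1,\b)}=p_m(w^{1,0})$, annihilating every summand in the bracket. Reflection symmetry $(x_1,x_2)\leftrightarrow(x_2,x_1)$ and central symmetry (under which every element of $\CV_{2m+1}$ is odd) then propagate the vanishing to the remaining three points of each four-point orbit.

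The $(m+2)$-nd polynomial $R\in\CV_{2m+1}$ must vanish on the off-diagonal nodes and on the $2m+1$ diagonal nodes $(\xi_{k,2m+1},\xi_{k,2m+1})$. I would seek $R$ as a linear combination $\sum_{k=0}^m c_k{}_1Q_{k,2m+1}^{(-\f12)}$ of the symmetric half of the basis. The key algebraic observation is that restricting to the diagonal $x_1=x_2=t$ collapses $2x_1x_2\mapsto 2t^2$ and $x_1^2+x_2^2-1\mapsto 2t^2-1$, so $R(t,t)=2t\cdot q(2t^2-1)$ with $q$ of degree $m$; the off-diagonal vanishing conditions then produce a linear system on the $c_k$ whose solution, after exploiting the paired structure of Proposition~\ref{prop:iQn_st} and orthogonality relations for $P_k^{(\a,\b+1)}$, forces $q$ to equal a scalar multiple of $S_m^{(-\f12)}$. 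Case (ii) is the mirror construction: the symmetric half of the basis covers the anti-diagonal $x_1=-x_2$, and the extra polynomial is built from the antisymmetric half, with its anti-diagonal restriction matched to $S_{m+1}^{(\f12)}$.

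The main obstacle is verifying that the $\binom{m+1}{2}$ off-diagonal constraints on the $m+1$ coefficients $c_k$ are in fact dependent down to rank $m$, so that the solution space is one-dimensional and the resulting $R$ restricts to the diagonal as a nonzero scalar multiple of $tS_m^{(-\f12)}(2t^2-1)$. This rank collapse is exactly the content of the quasi-orthogonal polynomial analysis of \cite{X17}, which encodes a Christoffel--Darboux-type identity relating $P_m^{(\a,\b+1)}$ and $P_m^{(\a+1,\b)}$ at the argument $2t^2-1$. Once $R$ is produced and the $2m+1$ zeros of $tS_m^{(-\f12)}(2t^2-1)$ in $(-1,1)$ are supplied by \cite{X17}, linear independence of the $m+2$ polynomials $\{{}_2Q_{k,2m+1}^{(-\f12)}:0\le k\le m\}\cup\{R\}$ follows because $R$ does not vanish on the diagonal, and Theorem~\ref{thm:min_cubature} concludes.
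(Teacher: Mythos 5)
Your proposal is correct and follows essentially the same route the paper takes: the paper itself only sketches this argument (the antisymmetric half $\{{}_2Q_{k,2m+1}^{(\pm\f12)}\}$ handling the off-diagonal orbits and the diagonal, plus one additional orthogonal polynomial whose diagonal restriction is the quasi-orthogonal polynomial $tS_m^{(-\f12)}(2t^2-1)$) and defers the substantive work---the construction of that extra polynomial and the reality and distinctness of the zeros of $S_m^{(\pm\f12)}$---to \cite{X17}, exactly as you do. Your added verification that the nodes $(s_{j,k}^{(m)},t_{j,k}^{(m)})$ send $\cos(\t_1\pm\t_2)$ to zeros of $P_m^{(\a+1,\b)}$ is a correct filling-in of a detail the paper leaves implicit.
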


This family of minimal cubature rules has the same type of behavior, as when $n$ is even, that the nodes are being 
propelled away from the diagonal when $\a$ and $\b$ increase, but there are always $2m$ points on the diagonal
$x_1 = x_2$. 

\section{Further minimal cubature rules of $2n-1$ with $n$ even} \label{sec:futher_min}
\setcounter{equation}{0}

Let $w$ be a weight function defined on $[-1,1]$. We consider minimal cubature rules for a more general class 
of weight functions defined by, for $\ell = 0, 1, 2,\ldots$, and $x_1 = \cos \t_1$ and $x_2 = \cos \t_2$, 
$$
 W_{-\frac12}^{(\ell)}(\x) 
     =  w\left(\cos \ell (\t_1-\t_2)\right) w\left(\cos \ell (\t_1+\t_2)\right) 
        \frac{\left| T_{\ell}(x_1)^2- T_{\ell}(x_2)^2\right|}{\sqrt{1-x_1^2} \sqrt{1-x_2^2} },
$$%
where $T_\ell$ is the Chebyshev polynomial, which agrees with $W_{-\f12}$ when $\ell = 0$. For $\ell \ge 1$, this 
weight function agrees with $W_{-\f12}$ in (i) of Definition \ref{defn:Wgamma} by replacing $w$ 
by $w^{(\ell)}$ defined by 
\begin{equation}\label{eq:w^ell}
    w^{(\ell)} (t) = w\left(T_\ell(t) \right) \frac{\sqrt{1- T_\ell^2(t)}} {\sqrt{1-t^2}}, \qquad -1 \le t \le 1.  
\end{equation}
Consequently, by Theorem \ref{thm:cubaCW}, the minimal cubature rules of degree $2n-1$ with $n$ even 
exist, and the nodes of the minimal cubature rule for $W_{-\f12}^{(\ell)}$ are based on zeros of $p_{n/2}(w^{(\ell)})$. 
For $n = 2 \ell m$, we provide an explicit constructive below so that the nodes are given in terms of the zeros 
$t_{k,m}$ of $p_n(w)$ and the wights $\l_{k,m}(w)$ are those of the Gaussian quadrature associated to $w$. 

Let $G_\ell$ be the group defined by rotations of multiples of $2\pi/ \ell$ about the origin. Denote the elements 
of $G_\ell$ by the angle of rotations, $G_\ell = \{\tau_{k,\ell} = 2 \pi k/ \ell: 0 \le k \le \ell -1\}$ and define the action
of $G_\ell$ on $f: [-1,1] \mapsto \RR$ by $\tau_{k,\ell} f(t) = f(\cos (\t+ \tau_{k,\ell}))$. 

\begin{lem}
Let $\ell = 1,2,\ldots$. Then the orthogonal polynomial of degree $ n = \ell m$ associated with $w^{(\ell)}$ is
$$
   p_{\ell m}\left(w^{(\ell)}; t\right ) = p_m\left (w; T_\ell(t) \right), \qquad m =0,1,2,\ldots. 
$$
\end{lem}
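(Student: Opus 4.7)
The plan is to verify that $p_m(w;T_\ell(t))$ satisfies the three defining properties of the orthonormal polynomial $p_{\ell m}(w^{(\ell)};t)$: the correct degree with positive leading coefficient, unit $L^2(w^{(\ell)})$-norm, and orthogonality against every polynomial in $t$ of degree strictly less than $\ell m$. Degree $\ell m$ and positivity of the leading coefficient are immediate from $T_\ell(t)=2^{\ell-1}t^\ell+\cdots$ together with $p_m(w;s)$ having positive leading coefficient. The remaining identities will be reduced to one-variable orthogonality via the substitution $t=\cos\theta$, under which $w^{(\ell)}(\cos\theta)\sin\theta=w(\cos\ell\theta)|\sin\ell\theta|$, so that
$$
 \int_{-1}^1 F(t)\,w^{(\ell)}(t)\,dt=\int_0^\pi F(\cos\theta)\,w(\cos\ell\theta)\,|\sin\ell\theta|\,d\theta.
$$

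The main computational step is to split the $\theta$-integral over the $\ell$ subintervals $[k\pi/\ell,(k+1)\pi/\ell]$ and change variables $\varphi=\ell\theta-k\pi\in[0,\pi]$ on each, so that $\cos\ell\theta=(-1)^k\cos\varphi$ and $|\sin\ell\theta|=\sin\varphi$. Applied with $F(t)=h(T_\ell(t))$ for a polynomial $h$, the substitutions $u=(-1)^k\cos\varphi$ collapse all $\ell$ pieces to the same value $\frac{1}{\ell}\int_{-1}^1 h(u)\,w(u)\,du$, producing the clean identity
$$
 \int_{-1}^1 h(T_\ell(t))\,w^{(\ell)}(t)\,dt=\int_{-1}^1 h(u)\,w(u)\,du.
$$
Taking $h=p_m(w;\cdot)^2$ then yields $\|p_m(w;T_\ell)\|_{w^{(\ell)}}^2=1$.

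For orthogonality, fix a polynomial $q$ of degree $d<\ell m$ and apply the same splitting to $F(t)=p_m(w;T_\ell(t))\,q(t)$. After the substitutions in each piece one obtains
$$
 \int_{-1}^1 F(t)\,w^{(\ell)}(t)\,dt=\frac{1}{\ell}\int_{-1}^1 p_m(w;u)\,\widetilde q(u)\,w(u)\,du,
$$
where, writing $\psi=\arccos u$, the function $\widetilde q(u)$ aggregates the $\ell$ contributions and, after using the evenness and $2\pi$-periodicity of cosine, can be written as $\widetilde q(u)=\sum_{k=0}^{\ell-1}q\!\left(\cos\tfrac{\psi+2\pi k}{\ell}\right)$. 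Expanding $q(\cos\phi)=\sum_{n=0}^d c_n\cos(n\phi)$ and using $\sum_{k=0}^{\ell-1}e^{2\pi i n k/\ell}\in\{0,\ell\}$, the averaging annihilates every Fourier mode with $\ell\nmid n$, leaving $\widetilde q(u)=\ell\sum_{n'=0}^{\lfloor d/\ell\rfloor}c_{\ell n'}\,T_{n'}(u)$, a genuine polynomial in $u$ of degree at most $\lfloor d/\ell\rfloor<m$. The orthogonality of $p_m(w;\cdot)$ to all polynomials of degree less than $m$ then closes the argument.

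I expect the main obstacle to be handling the sign $(-1)^k$ arising from $\cos\ell\theta=(-1)^k\cos\varphi$: after the substitution $u=(-1)^k\cos\varphi$, the odd-$k$ pieces produce $q\!\left(\cos\tfrac{(k+1)\pi-\psi}{\ell}\right)$ rather than $q\!\left(\cos\tfrac{\psi+k\pi}{\ell}\right)$, and one must invoke the evenness of cosine together with its $2\pi$-periodicity to recognize that as $k$ runs through $0,\dots,\ell-1$ the $\ell$ arguments exhaust the set $\{(\psi+2\pi k)/\ell:k=0,\dots,\ell-1\}$ modulo $2\pi$. With that identification in hand, the reduction to Fourier orthogonality on the circle is routine.
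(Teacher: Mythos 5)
Your proof is correct and follows essentially the same route as the paper: both rest on the substitution $t=\cos\theta$, the splitting of $[0,\pi]$ into $\ell$ subintervals to obtain $\int_{-1}^1 h(T_\ell(t))\,w^{(\ell)}(t)\,\d t=\int_{-1}^1 h(u)\,w(u)\,\d u$, and the root-of-unity cancellation $\sum_{k=0}^{\ell-1}\e^{\i 2\pi nk/\ell}=0$ for $\ell\nmid n$. The only (cosmetic) differences are that you average a general test polynomial $q$ down to a polynomial of degree below $m$ where the paper instead tests against the Chebyshev basis $T_k$ and splits into the cases $\ell\mid k$ and $\ell\nmid k$, and that you additionally verify the $L^2(w^{(\ell)})$-normalization, which the paper leaves implicit.
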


\begin{proof}
We start with an elementary integral identity 
\begin{align} \label{eq:intTell=T}
\int_{-1}^1 f\left(T_\ell( t) \right) \frac{\d t} {\sqrt{1-t^2} }\d t = \int_{-1}^1 f(t) \frac{\d t}{\sqrt{1-t^2}} \d t. 
\end{align}%
This follows from a changing variable $t \mapsto \cos \t$ so that 
\begin{align*}
\int_{-1}^1 f\left(T_\ell( t) \right) \frac{\d t} {\sqrt{1-t^2} }\d t \, & = \int_0^\pi f(\cos \ell \t) \d \t
 =  \frac{1}{\ell}\int_0^{\ell \pi} f(\cos \t) \d \t \\
  & = \frac{1}{\ell} \sum_{k=0}^{\ell-1} \int_{k}^{k+1} f(\cos \t) \d \t = \int_0^\pi f(\cos \t) \d \t,
\end{align*}
where the last step follows a change of variable $\t \mapsto k \pi + \phi$ if $k$ is even and $\t \mapsto (k+1) \pi - \phi$ 
if $k$ is odd. For $k =1,2,\ldots$, we claim that 
$$
   \int_{-1}^1 f\left(T_\ell (t) \right) T_k (t) \frac{\d t}{\sqrt{1-t^2}} = 0, \qquad \hbox{if $ k \nequiv 0 \mod \ell$}. 
$$%
Indeed, changing variable $t \mapsto \cos \t$, the integral in the left-hand side becomes
\begin{align*}
   \int_0^\pi f(\cos \ell \t) \cos k \t \d \t & = \frac12 \int_{-\pi}^\pi f(\cos \ell t) \cos k \t \d \t \\
    &  = \frac1{2 \ell} \sum_{j=0}^{\ell-1}  \int_{-\pi}^\pi f(\cos \ell t) \cos k \left(\t + \tfrac{2\pi j}{\ell}\right) \d \t
\end{align*}
by the $2\pi$-periodicity. Writing $\cos k (\t + \tau_j) = \cos k \t \cos k \tau_j  - \sin k \t \sin k \tau_j$, the last
integral is zero because 
$$
    \sum_{j=0}^{\ell -1} \left( \cos k \tfrac{2\pi j}{\ell} + \i \sin \tfrac{2\pi j}{\ell}\right)
        =  \sum_{j=0}^{\ell -1} \e^{\i  \frac{2\pi k j}{\ell}} = 0 \quad \hbox{if $ k \nequiv 0 \mod \ell$}.
$$ 
Let $\varpi(t) = w(t) \sqrt{1- t^2}$ so that $w^{(\ell)}(t) = \varpi(T_\ell(t))/\sqrt{1-t^2}$. Then, by \eqref{eq:intTell=T} and
$T_{\ell j}(t) = T_j(T_\ell(t))$, 
\begin{align*}
  \int_{-1}^1 p_m\left (w; T_\ell(t)\right) T_k(t) w^{(\ell)}(t) \d t = \int_{-1}^1 p_m (w; t) T_j(t) w(t) \d t =0
\end{align*}
if $k = \ell j$ and $0 \le j \le m-1$, whereas the integral on the left-hand side is zero if $k \nequiv 0 \mod \ell$. This
shows that $p_m\left (w; T_\ell(t)\right)$ is an orthogonal polynomial of degree $\ell m$ associated with $w^{(\ell)}$. 
\end{proof}

\begin{lem}
Let $\ell = 1,2,\ldots$. Then the integral 
$$
    \CI^{(\ell)} f: =  \int_{[-1,1]^2} f(\x) W_{-\frac12}^{(\ell)}(\x) \d \x 
$$%
is invariant under the action of the group $G_{\ell} \times G_{\ell}$. Moreover,
\begin{equation} \label{eq:IntTn=Int}
    \int_{[-1,1]^2} f\big(T_\ell (x_1),T_\ell(x_2) \big) W_{-\frac12}^{(\ell)}(\x) \d \x  = \int_{[-1,1]^2} f(\x) W_{-\frac12} (\x) \d \x.
\end{equation}
\end{lem}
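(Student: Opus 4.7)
The plan is to pass to angle coordinates $x_i=\cos\theta_i$ with $\theta_i\in[0,\pi]$, in which both assertions become transparent consequences of the $(2\pi/\ell)$-periodicity of the transformed weight. Because $dx_i=-\sin\theta_i\,d\theta_i$ and $\sqrt{1-x_i^2}=\sin\theta_i$ on $(0,\pi)$, the $\sin\theta_i$ factors from the Jacobian will cancel the denominators of $W_{-\f12}^{(\ell)}$, and using $T_\ell(\cos\theta)=\cos\ell\theta$ this will give
\[
\CI^{(\ell)}f = \int_0^\pi\!\!\int_0^\pi f(\cos\theta_1,\cos\theta_2)\,g(\theta_1,\theta_2)\,d\theta_1\,d\theta_2,
\]
where $g(\theta_1,\theta_2):=w(\cos\ell(\theta_1-\theta_2))\,w(\cos\ell(\theta_1+\theta_2))\,|\cos^2\ell\theta_1-\cos^2\ell\theta_2|$ depends on the angles only through $\ell\theta_1,\ell\theta_2$.

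For part (i) I would argue that $g$ is $(2\pi/\ell)$-periodic and even in each variable, hence invariant under the translations $\theta_i\mapsto\theta_i+\tau_{k_i,\ell}$ of $G_\ell\times G_\ell$. Extending $F(\theta):=f(\cos\theta_1,\cos\theta_2)$ to the torus $\mathbb T^2=(\RR/2\pi\ZZ)^2$ --- it is automatically $2\pi$-periodic and even in each variable --- one has $\CI^{(\ell)}f=\tfrac14\int_{\mathbb T^2}Fg\,d\theta$, and the measure $g(\theta)\,d\theta$ on $\mathbb T^2$ is $G_\ell\times G_\ell$-invariant. Equivalently, which is the cleanest way to package the invariance, the Fourier expansion of $g$ on $\mathbb T^2$ is supported on $\ell\ZZ\times\ell\ZZ$; by orthogonality only Fourier modes of $F$ at frequencies in $\ell\ZZ\times\ell\ZZ$ contribute to the integral, so $\CI^{(\ell)}f$ coincides with its value on the $G_\ell\times G_\ell$-symmetrization of $f$.

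For part (ii) I would start from the angular form of the left-hand side of \eqref{eq:IntTn=Int}, namely $\int_0^\pi\!\int_0^\pi f(\cos\ell\theta_1,\cos\ell\theta_2)\,g(\theta_1,\theta_2)\,d\theta$, and substitute $\phi_i=\ell\theta_i$ to obtain
\[
\frac{1}{\ell^2}\int_0^{\ell\pi}\!\!\int_0^{\ell\pi} f(\cos\phi_1,\cos\phi_2)\,\tilde g(\phi_1,\phi_2)\,d\phi_1\,d\phi_2,
\]
with $\tilde g(\phi_1,\phi_2)=w(\cos(\phi_1-\phi_2))w(\cos(\phi_1+\phi_2))|\cos^2\phi_1-\cos^2\phi_2|$. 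Splitting $[0,\ell\pi]^2$ into the $\ell^2$ sub-squares $[k_1\pi,(k_1+1)\pi]\times[k_2\pi,(k_2+1)\pi]$, I would then apply on each the parity-dependent substitution from the preceding lemma --- $\phi_i=k_i\pi+\psi_i$ when $k_i$ is even and $\phi_i=(k_i+1)\pi-\psi_i$ when $k_i$ is odd, with $\psi_i\in[0,\pi]$. In all four parity cases one has $\cos\phi_i=\cos\psi_i$, so $f$ and the $\cos^2$-factor are unaltered; a short sign check will show that the pair $\{\cos(\phi_1-\phi_2),\cos(\phi_1+\phi_2)\}$ reduces to $\{\cos(\psi_1-\psi_2),\cos(\psi_1+\psi_2)\}$ (possibly swapped in the two mixed-parity cases), so the symmetric product $w(\cos(\phi_1-\phi_2))w(\cos(\phi_1+\phi_2))$ becomes $w(\cos(\psi_1-\psi_2))w(\cos(\psi_1+\psi_2))$. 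Each of the $\ell^2$ sub-squares will therefore contribute the same integral over $[0,\pi]^2$; summing and dividing by $\ell^2$ produces exactly the angular form of $\int_{[-1,1]^2}f(\x)\,W_{-\f12}(\x)\,d\x$.

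The hard part will be the parity-by-parity sign bookkeeping in (ii): shifts of $\phi_i$ by integer multiples of $\pi$ do change the arguments of each $w$-factor by signs of the form $(-1)^{k_1\pm k_2+\epsilon}$, and the reason the product $w\cdot w$ survives intact is that these signs always pair with even total parity (up to swapping the roles of $\psi_1+\psi_2$ and $\psi_1-\psi_2$). Part (i) will be essentially free once the Fourier support of $g$ on $\mathbb T^2$ is noted. All of this is driven by the single observation that $g$ depends on $\theta_1,\theta_2$ only through $\ell\theta_1,\ell\theta_2$, which simultaneously furnishes the $G_\ell\times G_\ell$-invariance of the measure and, via the $\phi_i=\ell\theta_i$ unfolding, reduces the $W_{-\f12}^{(\ell)}$ integral with a $T_\ell\otimes T_\ell$ composition to the un-lifted $W_{-\f12}$ integral.
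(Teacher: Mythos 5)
Your argument is correct and follows essentially the same route as the paper: the change of variables $x_i=\cos\t_i$ turns the weight into a function of $(\ell\t_1,\ell\t_2)$ alone, which gives the $G_\ell\times G_\ell$-invariance, and the identity \eqref{eq:IntTn=Int} is obtained by unfolding $[0,\ell\pi]^2$ onto $[0,\pi]^2$, which is exactly the two-variable form of the one-dimensional identity \eqref{eq:intTell=T} that the paper applies in each variable. Your explicit parity bookkeeping in part (ii) --- in particular the observation that in the mixed-parity sub-squares the arguments $\cos(\phi_1-\phi_2)$ and $\cos(\phi_1+\phi_2)$ get swapped while their symmetric product under $w$ is unchanged --- is a worthwhile elaboration, since the weight is not a tensor product and this swap-invariance is precisely what licenses applying the one-variable unfolding ``on both variables.''
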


\begin{proof}
Changing variables $(x_1,x_2) \mapsto (\cos \t_1, \cos \t_2)$, we obtain 
$$
\CI^{(\ell)} f = \int_0^\pi \int_0^\pi f(\cos \t_1, \cos\t_2) \varpi\left( \ell \t_1, \ell \t_2\right) \d \t_1 \d \t_2 
$$%
where the weight function $\varpi$ is defined by 
$$
   \varpi(\t_1,\t_2) =  w(\cos (\t_1-\t_2)) w(\t_1+\t_2)) \left| \cos^2 \t_1 - \cos^2\t_2)\right|. 
$$%
form which we conclude that $\CI^{(\ell)}$ is invariant, as $\varpi\left( 2\ell \t_1, 2\ell \t_2\right)$ is, under 
$G_{\ell}\times G_{\ell}$. The identity \eqref{eq:IntTn=Int} follows from applying \eqref{eq:intTell=T} on both 
variables. 
\end{proof}
 
Let $\ell =1,2,3,\ldots$. For $s = \cos \t$ and $t = \cos \phi$ with $0 \le s,t < \pi$, we define sets of points that will
be used to define the nodes of the cubature rules. We need to consider two cases, depending on the parity of $\ell$. 

\medskip\noindent
{\bf Case 1. $\ell$ even}. Assume $\ell = 2,4,6,\ldots$, we define 
$$
  X_\ell^-(s,t) = \bigcup \left \{\left(\cos\frac{(2\nu_1+1) \pi \pm \t}{\ell}, \cos\frac{(2\nu_2+1) \pi \pm \phi}{\ell}\right), \,
     0\le \nu_1, \nu_2 \le  \frac{\ell}{2} -1 \right \},
$$%
where the union is over all four sign changes, and define  
\begin{align*}
X_\ell^+(s,t) = \bigcup \left \{ \left(\cos\frac{2\nu_1 \pi \pm \t}{\ell}, \cos\frac{2\nu_2 \pi \pm \phi}{\ell}\right), 
   \delta_{\pm} \le \nu_1, \nu_2 \le \frac{\ell}2 -1 + \delta_{\pm} \right \}
\end{align*}%
where $\delta_+ = 0$ and $\delta_- = 1$. 

\medskip\noindent
{\bf Case 2. $\ell$ odd}. Assume $\ell = 1,3,5, \ldots$, we define 
$$
  X_\ell^-(s,t) = \bigcup \left \{\left(\cos\frac{(2\nu_1+1) \pi \pm \t}{\ell}, \cos\frac{(2\nu_2+1) \pi \pm \phi}{\ell}\right),
     0\le \nu_1, \nu_2 \le  \frac{\ell-1}{2} -1+\delta_{\pm} \right \},
$$%
where $\delta_+ =0$ and $\delta_- = 1$ and define, with the same $\delta_\pm$,   
\begin{align*}
X_\ell^+(s,t) = \bigcup \left \{ \left(\cos\frac{2\nu_1 \pi \pm \t}{\ell}, \cos\frac{2\nu_2 \pi \pm \phi}{\ell}\right),  
  \, \delta_{\pm} \le \nu_1, \nu_2 \le \frac{\ell-1}2 \right \}.
\end{align*}%

The choices of $\delta_\pm$ ensure that the angular argument of the points are all in $[0,\pi)$ so that all points
in $X_\ell^\pm(s,t)$ are in $(-1,1]^2$. These sets are selected to have the property \eqref{eq:Tn(u)=s} below.  

\begin{lem}\label{lem:7.4.2}
For $\ell = 2,3,\ldots$, 
\begin{align}\label{eq:Tn(u)=s}
\begin{split}
   \big(T_{\ell}(u), T_{\ell}(v)\big) \, & = (s,t), \quad\quad\, \hbox{if $(u,v) \in X_\ell^+(s,t)$}, \\
  \big(T_{\ell}(u), T_{\ell}(v)\big) \, & = (-s,-t), \quad \hbox{if $(u,v) \in X_\ell^-(s,t)$}, 
\end{split}
\end{align}%
Moreover, the cardinality of $X_\ell^\pm(s,t)$ is given by 
$$
 \left|X_\ell^\pm(s,t)\right | = \begin{cases}  \ell^2,  &   s \ne 1, \, t \ne 1, \\
       \frac{\ell (\ell+1)}{2} &   s =1,\, , t \ne 1 \, \hbox{or} \, s \ne 1, \, t =1, \,  \hbox{if $\ell$ is odd}.
         \end{cases}
$$%
whereas if $s =1$, $t \ne 1$ or $s \ne 1$, $t =1$, 
\begin{align*}
  \left|X_\ell^-(s,t)\right | \,& = \frac{\ell^2}{2} \quad \hbox{and}\quad \left|X_\ell^+(s,t)\right | = \frac{\ell^2}{2} +\ell \quad \hbox{if $\ell$ is even}\\
   \left| X_\ell^\pm (s,t) \right | \,& = \frac{\ell (\ell+1)}{2}, \qquad  \hbox{if $\ell$ is even}.
\end{align*}
\end{lem}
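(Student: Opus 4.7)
The proof has two independent parts: the identity \eqref{eq:Tn(u)=s}, and the cardinality counts. For the first, the plan is essentially a one-line computation. Combining $T_\ell(\cos\a) = \cos(\ell\a)$ with the elementary fact $\cos(k\pi \pm \g) = (-1)^k \cos\g$, any $(u,v) \in X_\ell^-(s,t)$ (for either parity of $\ell$) satisfies
$$
  T_\ell(u) = \cos\bigl((2\nu_1+1)\pi \pm \t\bigr) = -\cos\t = -s,
$$
and analogously $T_\ell(v) = -t$, because $2\nu_1+1$ is odd. The argument for $X_\ell^+(s,t)$ is the same, now using $\cos(2\nu_1\pi \pm \t) = \cos\t$ since $2\nu_1$ is even. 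So this portion is routine.

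For the cardinality, the strategy is to count the distinct values of the first coordinate and of the second coordinate separately and then multiply, since the sets have a product structure indexed by $(\nu_1,\nu_2)$ with independent sign choices on the two coordinates. The choices of $\delta_\pm$ in Case 2 are precisely what guarantee that every angular argument lies in $[0,\pi)$, so distinct angles correspond to distinct points of $(-1,1]^2$ and no $\mathrm{mod}\,2\pi$ identifications occur.

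I would begin by showing that the ``base angles'' $\frac{(2\nu_1+1)\pi}{\ell}$ for $X_\ell^-$ and $\frac{2\nu_1\pi}{\ell}$ for $X_\ell^+$ give $\ell/2$ distinct values in $[0,\pi)$ when $\ell$ is even, and $(\ell\pm1)/2$ values when $\ell$ is odd (the sign of the correction being governed by the $\delta_\pm$ convention). Since $\t \in (0,\pi)$ gives $|\t/\ell| < \pi/\ell$, the perturbations $\pm \t/\ell$ cannot cause the image of one $\nu_1$ to collide with the image of any other $\nu_1$; they only double the count for each fixed $\nu_1$, except when $\t = 0$ (i.e.\ $s=1$) in which case the two signs coincide and the count on that coordinate is halved. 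Taking the product of the two coordinate counts immediately yields $\ell^2$ in the generic case $s,t \ne 1$.

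The main obstacle, and the bulk of the bookkeeping, is the degenerate case where one of $s,t$ equals $1$. The $\pm$ collapse on that coordinate, and one must check whether any base angle arising from $\nu_1 = 0$ or the extremal value of $\nu_1$ lands on the boundary of $[0,\pi)$, for that is where the various $\delta_\pm$ conventions interact with the two different ways the range of $\nu_1$ can be written. When $\ell$ is odd, the accounting produces $(\ell+1)/2$ surviving values on the degenerate coordinate and $\ell$ on the other, yielding $\ell(\ell+1)/2$ for both signs $\pm$. When $\ell$ is even, the count is asymmetric in the sign: in $X_\ell^-$ none of the base angles is an endpoint, so one gets $\ell/2$ surviving values and the count $\ell^2/2$; in $X_\ell^+$ the endpoint $0$ contributes an additional $\ell$ points (one for each choice of the other coordinate), producing $\ell^2/2 + \ell$. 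Tracking this endpoint accounting carefully across the four sub-cases (two parities of $\ell$, two signs) is routine but is the only place the proof requires attention.
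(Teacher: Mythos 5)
Your argument is correct and follows the same route as the paper: the identity is immediate from $T_\ell(\cos\a)=\cos(\ell\a)$ together with the parity of the integer multiple of $\pi$, and the cardinalities come from counting distinct angles coordinate-by-coordinate, with the only care needed in the degenerate case $\t=0$ (or $\phi=0$), where the two sign branches collapse and the union of the two index ranges must be counted; your case analysis reproduces exactly the counts $\ell^2$, $\ell(\ell+1)/2$, $\ell^2/2$, and $\ell^2/2+\ell$ that the paper obtains. (Your accounting also implicitly corrects the obvious typo in the lemma's statement, where ``if $\ell$ is even'' appears where ``if $\ell$ is odd'' is intended.)
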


\begin{proof}
The identity \eqref{eq:Tn(u)=s} is immediate and the reason that $X_\ell^\pm$ is defined. Since $s = 1$ or $t=1$ is the same as 
$\t =0$ or $\phi =0$, the cardinality of $X_\ell^\pm(s,t)$ is straightforwardly $4 \times (\ell/2)^2$ when $\t \ne 0$ and $\phi \ne 0$. 
If $s =0$ and $t=\ne 0$, or $s\ne 0$ and $t=0$, then the index for $\t=0$, say, is between $0 \le (\ell-1)/2$, so that the 
cardinality of $X_\ell^\pm(1,t)$ is $\ell \times (\ell+1)/2$, whereas if $\ell$ is even, then $X_\ell^-(s,t)$ is $2 \times (\ell/2)^2 = \ell^2/2$
and $X_\ell^+(1,t)$, say, contains $\ell/2+1$ distinct $\nu_1$, so that $|X_\ell^+(1,t)| = 2 \times (\ell/2+1) \ell/2 = \ell^2/2+\ell$. 
\end{proof} 
  
Let $w$ be a weight function on $[-1,1]$ and let $t_{k,m} = \cos \t_{k,m}$ be the zeros of the polynomials $p_n(w)$. We
retain the notation \eqref{stjk} but rewrite it further as 
\begin{align*}
     s_{j,k}^{(m)} & = \cos \t_{j,k}^{(m)}  \quad \hbox{with} \quad \t_{j,k}^{(m)} = \frac{\t_{j,m}- \t_{k,m}}{2}, \\
      t_{j,k}^{(m)} & = \cos \phi_{j,k}^{(m)}   \quad \hbox{with} \quad \phi_{j,k}^{(m)} = \frac{\t_{j,m}+ \t_{k,m}}{2}.
\end{align*}%
We further define the set 
 \begin{align*}
   X_{\ell}(j,k):=   X_\ell^-\left(-s_{j,k}^{(m)}, -t_{j,k}^{(m)}\right)
       &   \bigcup X_\ell^-\left(-t_{j,k}^{(m)}, -s_{j,k}^{(m)}\right), \\ 
        &  \bigcup X_\ell^+\left(s_{j,k}^{(m)}, t_{j,k}^{(m)}\right) \bigcup X_\ell^+\left(t_{j,k}^{(m)}, s_{j,k}^{(m)}\right).
\end{align*}
As in Theorem \ref{thm:cubaCW}, let $\l_{k,m}(w)$ denote the quadrature weights of the Gauss quadrature associated
to $w$ on $[-1,1]$. 

\begin{thm}
Let $n = 2\ell m$ for $\ell = 1, 2,\ldots$.  Then the weight function $W_{-\frac12}^{(\ell)}$ admits a minimal cubature 
rule of degree $2n-1 =4 \ell m-1$ with $2 \ell^2 m^2+2\ell m$ nodes. More precisely,   
\begin{align}\label{eq:cuba_invaTn}
 \int_{[-1,1]^2} f(\x) W_{-\f12}^{(\ell)}(\x) \d \x = \frac12 \sum_{k=1}^m \mathop{ {\sum}' }_{j=1}^k \mu_{k,m}  \mu_{j,m}  
  \frac{1}{|X_\ell(j,k)|}\sum_{(s,t) \in X_\ell (j,k)} f(s,t),
\end{align}
where $|X_\ell(j,k)| = 4 \ell^2$ if $j \ne k$ and $|X_\ell(k,k)| = 2\ell^2 + 2\ell$.
\end{thm}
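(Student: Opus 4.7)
The plan is to deduce this from Theorem~\ref{thm:cubaCW}(i) applied with $w$ replaced by $w^{(\ell)}$, followed by a symmetrization under the group that leaves $W_{-\f12}^{(\ell)}$ invariant. As noted just before the definition of $w^{(\ell)}$, the weight $W_{-\f12}^{(\ell)}$ is exactly the weight of Definition~\ref{defn:Wgamma} with $w$ replaced by $w^{(\ell)}$. Since $n=2\ell m$ is even, Theorem~\ref{thm:cubaCW}(i) (with $\ell m$ in place of $m$) produces a minimal cubature rule of degree $4\ell m-1$ with $2\ell m(\ell m+1)=2\ell^2 m^2+2\ell m$ nodes, matching the count in the statement. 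The remaining task is to identify these nodes and their weights with the $G_\ell\times G_\ell$-orbits $X_\ell(j,k)$ and the uniform orbit weight $\mu_{k,m}\mu_{j,m}/|X_\ell(j,k)|$ of \eqref{eq:cuba_invaTn}.

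For the nodes, the first lemma of the section gives $p_{\ell m}(w^{(\ell)};t)=p_m(w;T_\ell(t))$, so the zeros of $p_{\ell m}(w^{(\ell)})$ in $(-1,1)$ are precisely the preimages under $T_\ell$ of the zeros $t_{k,m}=\cos\t_{k,m}$ of $p_m(w)$. In angular form each preimage is $\cos\phi$ with $\phi=(2\pi\nu\pm\t_{k,m})/\ell$ for an admissible integer $\nu$, and each fiber contains exactly $\ell$ points. A node $(s_{j',k'}^{(\ell m)},t_{j',k'}^{(\ell m)})$ from Theorem~\ref{thm:cubaCW} is the pair of cosines of half-difference and half-sum of two such angles; a direct trigonometric expansion identifies it, together with its three sign/swap partners $(\pm s,\pm t)$ and $(\pm t,\pm s)$, as a subset of the orbit $X_\ell(j,k)$ associated with the fibers $\{j,k\}$ of $(j',k')$. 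As $(j',k')$ ranges over $1\le j'\le k'\le\ell m$, the resulting node list partitions into $\bigcup_{1\le j\le k\le m}X_\ell(j,k)$; Lemma~\ref{lem:7.4.2} then confirms that $2\ell^2 m^2+2\ell m=4\ell^2\binom{m}{2}+m(2\ell^2+2\ell)$.

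For the weights, the $2\pi/\ell$-periodicity of the measure $w(\cos\ell\t)|\sin\ell\t|\,\d\t=w^{(\ell)}(\cos\t)\sin\t\,\d\t$, combined with the identity \eqref{eq:intTell=T} applied to polynomials of the form $p(T_\ell(t))$, shows that $\l_{k',\ell m}(w^{(\ell)})=\l_{k,m}(w)/\ell$ for every preimage $t_{k',\ell m}(w^{(\ell)})$ lying above $t_{k,m}$. Substituting into \eqref{MinimalCuba2-} for $w^{(\ell)}$, every off-diagonal orbit $X_\ell(j,k)$ with $j\ne k$ automatically acquires the uniform weight $\l_{j,m}(w)\l_{k,m}(w)/(2\ell^2)$ on each of its $4\ell^2$ nodes. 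On a diagonal orbit $X_\ell(k,k)$ the raw weights are not yet uniform, since some nodes come from off-diagonal preimage pairs and others from diagonal ones; averaging under the symmetry group $G_\ell\times G_\ell$ (together with the swap $x_1\leftrightarrow x_2$ and the reflection $\x\mapsto-\x$), which leaves $\CI^{(\ell)}$ invariant by the second lemma of the section, produces an equivalent minimal cubature with uniform orbit weight equal to $\l_{k,m}(w)^2/(2\ell^2+2\ell)$ on $X_\ell(k,k)$. Both cases are encoded by \eqref{eq:cuba_invaTn} with $\mu_{k,m}=2\l_{k,m}(w)$.

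The main obstacle is the combinatorial bookkeeping: one must verify rigorously, separately for the two parities of $\ell$ and for the boundary case $j=k$ (where $s_{k,k}^{(m)}=1$ forces the orbit size to drop to $2\ell^2+2\ell$), that the preimage pairs $(j',k')$ sort into the orbits $X_\ell(j,k)$ as claimed by Lemma~\ref{lem:7.4.2} without overlap or omission. Once this partition and the fiber-weight identity $\l_{k',\ell m}(w^{(\ell)})=\l_{k,m}(w)/\ell$ are established, the symmetrization step for the diagonal orbits is standard, and the cubature identity \eqref{eq:cuba_invaTn} follows directly from Theorem~\ref{thm:cubaCW}(i) applied to $w^{(\ell)}$ together with the $G_\ell\times G_\ell$-invariance of the integral.
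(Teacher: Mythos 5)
Your route is genuinely different from the paper's. The paper does not apply Theorem~\ref{thm:cubaCW} to $w^{(\ell)}$ at all: it takes the degree-$(4m-1)$ rule \eqref{MinimalCuba2-} for the \emph{original} $w$, pushes it forward through $(x_1,x_2)\mapsto(T_\ell(x_1),T_\ell(x_2))$ using \eqref{eq:IntTn=Int} and \eqref{eq:Tn(u)=s} — distributing each original weight uniformly over its preimage orbit $X_\ell(j,k)$ by fiat — and thereby verifies \eqref{eq:cuba_invaTn} only on $G_\ell\times G_\ell$-invariant polynomials of degree $\le 4\ell m-1$. Sobolev's theorem on invariant cubature rules then upgrades this to all polynomials of that degree. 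This sidesteps everything your plan must supply: no knowledge of $p_j(w^{(\ell)})$ for $j\not\equiv 0 \pmod\ell$, no Gauss weights for $w^{(\ell)}$, and no node-matching combinatorics beyond Lemma~\ref{lem:7.4.2}.

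As written, your argument has two genuine gaps. First, the fiber-weight identity $\l_{k',\ell m}(w^{(\ell)})=\l_{k,m}(w)/\ell$ does not follow from the justification you give: testing the Gauss rule for $w^{(\ell)}$ against polynomials of the form $g(T_\ell(t))$ with $\deg g\le 2m-1$, via \eqref{eq:intTell=T}, only shows that the \emph{sum} of the $\ell$ weights over each fiber equals $\l_{k,m}(w)$, because such test functions cannot separate points within a fiber; equality of the individual weights needs a separate argument (it is a nontrivial fact about polynomial mappings of measures). Second, the identification of the node set of the $w^{(\ell)}$-rule with $\bigcup_{j\le k}X_\ell(j,k)$ — which you yourself flag as ``the main obstacle'' — is not carried out, and it is delicate: the half-sum and half-difference angles of two fiber points are coupled through the parity of $\nu_1\pm\nu_2$, whereas the indices in $X_\ell^\pm(s,t)$ range independently, so one must check that the union over all sign and swap partners fills each orbit exactly once. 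Finally, your own analysis produces non-uniform raw weights on the diagonal orbits $X_\ell(k,k)$, while \eqref{eq:cuba_invaTn} asserts uniform ones on the same nodes; since the weights of a cubature rule attaining M\"oller's bound are determined by the nodes, this tension indicates that at least one of the two preceding claims cannot survive in the form stated, and the proposed group-averaging does not resolve it (it would merely replace one exact rule by another, leaving unproved that the result is \eqref{eq:cuba_invaTn}). The normalization $\mu_{k,m}=2\l_{k,m}(w)$ is also off: matching the paper's proof requires $\mu_{k,m}=\l_{k,m}(w)$.
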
 

\begin{proof}
By the periodicity of the cosine function, the right-hand side of \eqref{eq:cuba_invaTn} is invariant under $G_\ell \times G_\ell$. 
Using the identity in \eqref{eq:IntTn=Int} and applying the minimal cubature rule in Theorem \ref{thm:cubaCW}, we obtain
\begin{align*}
 & \int_{[-1,1]^2} f\left(T_{\ell m}(x_1), T_{\ell m}(x_2)\right)W_{-\f12}^{(\ell)}(\x) \d \x 
       = \int_{[-1,1]^2} f(x_1), x_2)W_{-\f12} (\x) \d \x \\
  = &   \frac12 \sum_{k=1}^m \mathop{ {\sum}' }_{j=1}^k \l_{j,k} 
   \left[ f\left( s_{j,k}^{(m)}, t_{j,k}^{(m)}\right)+  f\left( t_{j,k}^{(m)}, s_{j,k}^{(m)}\right) 
                     +  f\left ( - s_{j,k}^{(m)}, - t_{j,k}^{(m)}\right)+  f\left(- t_{j,k}^{(m)},- s_{j,k}^{(m)}\right)  \right] \\
  = & \frac12 \sum_{k=1}^m \mathop{ {\sum}' }_{j=1}^k \l_{j,m}(w) \l_{k,m}(w)
           \frac{1}{|X_\ell(j,k)|}\sum_{(u,v) \in X_\ell (j,k)} f  \big(T_{\ell}(u), T_{\ell}(v)\big),              
\end{align*}
where $\l_{j,k} = \l_{j,m}(w) \l_{k,m}(w)$ and the last equal sign follows from \eqref{eq:Tn(u)=s}, which holds for all
polynomials $f\in \Pi_{4m-1}^d$. This establishes the cubature rule \eqref{eq:cuba_invaTn} for all polynomials of 
the form $f\left(T_{\ell}(x_1), T_{\ell}(x_2)\right)$ of degree $4 \ell m -1$, which are all invariant 
polynomials under $G_\ell \times G_\ell$ of the degree less than $4 \ell m$. By Sobolev's Theorem on invariant
cubature rules \cite{Sobolev}, which assures that we only need to verify invariant polynomials for an invariant cubature rule, 
this proves \eqref{eq:cuba_invaTn} for all polynomials of degree $4 \ell m -1$. Finally, by 
Lemma \ref{lem:7.4.2}, we obtain $|X_\ell(j,k)| = 4 \ell^2$ if $j \ne k$ and $|X_\ell(k,k)| = 2 \ell (\ell+1)$. Since
$s_{k,k} = \cos 0 =1$, the number of nodes of the cubature rule is equal to 
$$
      4 \ell^2 \left(\frac{m(m+1)}{2} - m \right)+ 2 \ell (\ell+1) m = 2 \ell^2 m^2 + 2 \ell m, 
$$%
which agrees with M\"oller's lower bound when $n = 2 \ell m$. 
\end{proof}

For $w$ being the Jacobi weight function $w_{\a,\b}(t) = (1-t)^\a(1+t)^\b$, $\a, \b > -1$, the weight function 
$W_{-\f12}^{(\ell)}$ becomes 
\begin{equation}\label{eq:WabTn}
  W_{\a,\b, -\f12}^{(\ell)}(\x) = \frac{|T_\ell(x_1) - T_\ell(x_2)|^{2 \a+1} |T_\ell(x_1) +T_\ell (x_2)|^{2 \b+1}}
      { \sqrt{1-x_1^2} \sqrt{1-x_2^2}}. 
\end{equation}%
For $\ell =1$, this is the weight function in \eqref{eq:Wab-1/2}. For $\ell = 2$, the weight function becomes
$$
W_{\a,\b,-\f12}^{(\ell)}(\x) = 2^{2\a+2\b+2} \frac{|x_1^2- x_2^2 |^{2 \a+1} |1 - x_1^2-x_2^2|^{2 \b+1}}
         { \sqrt{1-x_1^2} \sqrt{1-x_2^2}}. 
$$%

As an example, we consider the cubature rules of degree $2n-1 = 8m-1$ for $W_{\a,\b,-\f12}^{(2)}$, which 
has $8 m^2+4m$ number of nodes that are derived from the zeros of the Jacobi polynomials $P_m^{(\a,\b)}$. 
For $\beta > 0$, the nodes of the cubature rules are propelled away from the circle $x_1^2+x_2^2 =1$ and 
they are also propelled away from diagonals of the square if $\alpha > 0$ as well, as shown in Figure \ref{fig:minGGen47}.
\begin{figure}[ht]
\centering
\includegraphics[width=0.45\textwidth]{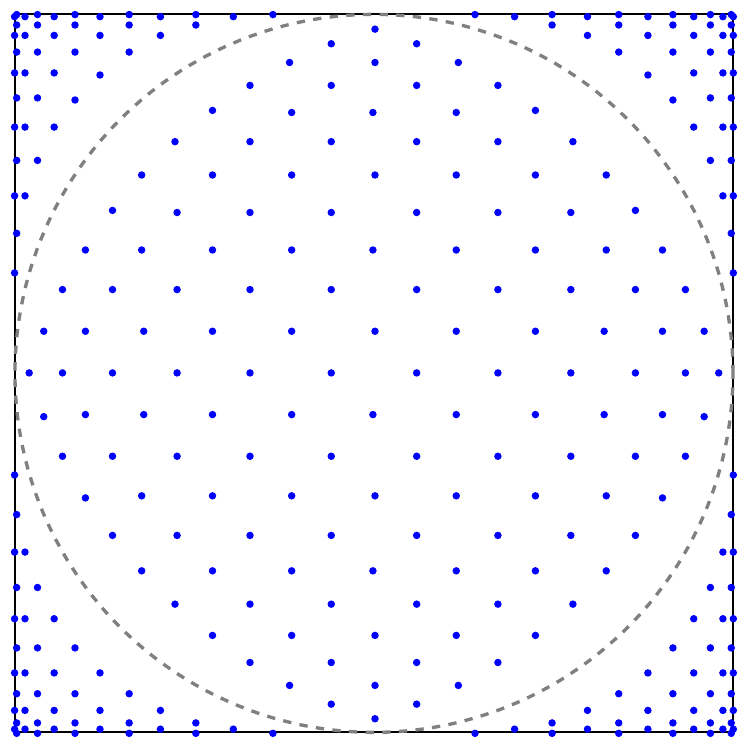}   
 \quad
\includegraphics[width=0.45\textwidth]{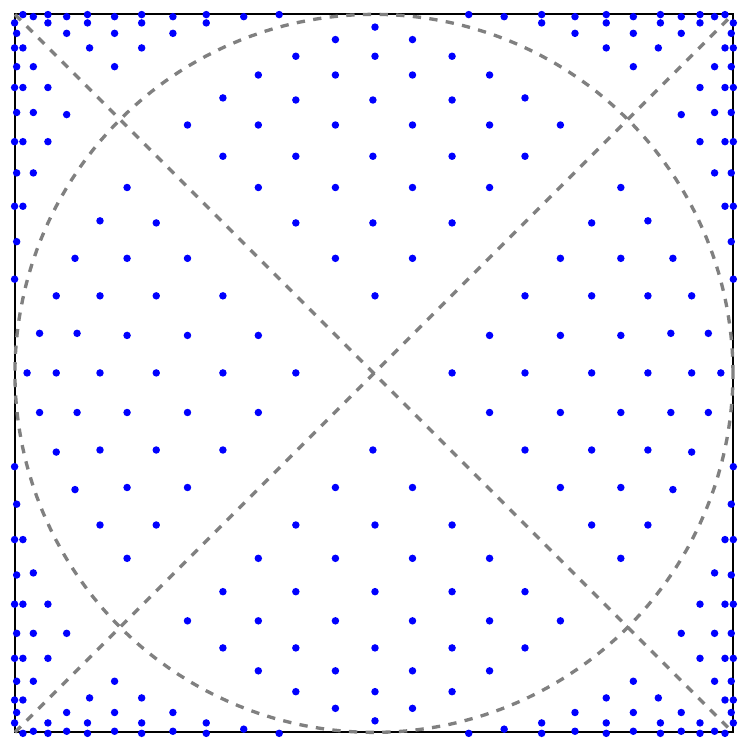} 
\caption{312 nodes of minimal cubature rule of degree $47$ for $W_{\a,\b, -\f12}^{(2)}$ with $\a =\f12$ and $\b=-\f12$ (left) and
$\a =\f12$ and $\b=\f12$ (right)}  \label{fig:minGGen47}
\end{figure}
The nodes will be further away from the unit circle as $\a$ increases and from the diagonals if $\b$ increases.

\end{document}